\theoremstyle{plain}
\newtheorem{definition}{Definition}
\newtheorem{lemma}{Lemma}
\newtheorem{remark}{Remark}
\newtheorem{theorem}{Theorem}
\numberwithin{equation}{section}
\begin{document}
\title[]{The $L^{2}$ sequential convergence of a solution to the one dimensional, mass-critical NLS above the ground state}

\author{Benjamin Dodson}

\begin{abstract}
In this paper we generalize a weak sequential result of \cite{fan20182} to any non-scattering solutions in one dimension. No symmetry assumptions are required for the initial data.

\end{abstract}
\maketitle

\section{Introduction}
In one dimension, the mass-critical nonlinear Schr{\"o}dinger equation (NLS) is the quintic NLS,
\begin{equation}\label{1.1}
i u_{t} + u_{xx} = \mu |u|^{4} u = \mu F(u), \qquad u(0,x) = u_{0}, \qquad u : I \times \mathbb{R} \rightarrow \mathbb{C}, \qquad \mu = \pm 1.
\end{equation}
$I \subset \mathbb{R}$ is an open interval, $0 \in I$. The case when $\mu = +1$ is the defocusing case, and the case when $\mu = -1$ is the focusing case.

Equation $(\ref{1.1})$ is called mass-critical due to the scaling symmetry. That is, if $u$ solves $(\ref{1.1})$, then for any $\lambda > 0$,
\begin{equation}\label{1.2}
\lambda^{1/2} u(\lambda^{2} t, \lambda x),
\end{equation}
also solves $(\ref{1.1})$ with initial data $\lambda^{1/2} u_{0}(\lambda x)$. The $L^{2}$ norm is preserved under $(\ref{1.2})$. The $L^{2}$ norm, or mass, is also conserved by the flow of $(\ref{1.1})$, if $u$ is a solution to $(\ref{1.1})$ on some interval $I \subset \mathbb{R}$, $0 \in I$, then for any $t \in I$,
\begin{equation}\label{1.3}
M(u(t)) = \int |u(t,x)|^{2} dx = \int |u(0,x)|^{2} dx.
\end{equation}

It is well-known that the local well-posedness of $(\ref{1.1})$ is completely determined by $L^{2}$-regularity. In the positive direction,  \cite{cazenave1989some}, \cite{cazenave1990cauchy} proved that $(\ref{1.1})$ is locally well-posed on some open interval for initial data $u_{0} \in L^{2}(\mathbb{R})$. Furthermore, if $u_{0} \in H_{x}^{s}(\mathbb{R})$ for some $s > 0$,  \cite{cazenave1989some}, \cite{cazenave1990cauchy} proved that $(\ref{1.1})$ was locally well-posed on an open interval $(-T, T)$, where $T(\| u_{0} \|_{H^{s}}) > 0$ depends only on the size of the initial data. Finally,  \cite{cazenave1989some}, \cite{cazenave1990cauchy} proved that there exists $\epsilon_{0} > 0$ such that if $\| u_{0} \|_{L^{2}} < \epsilon_{0}$, then $(\ref{1.1})$ is globally well-posed and scattering.
\begin{definition}[Scattering]\label{d1.1}
A solution to $(\ref{1.1})$ that is global forward in time, that is $u$ exists on $[0, \infty)$, is said to scatter forward in time if there exists $u_{+} \in L^{2}(\mathbb{R})$ such that
\begin{equation}\label{1.4}
\lim_{t \nearrow \infty} \| u(t) - e^{it \partial_{xx}} u_{+} \|_{L^{2}(\mathbb{R})} = 0.
\end{equation}
A solution to $(\ref{1.1})$ that is global backward in time is said to scatter backward in time if there exists $u_{-} \in L^{2}(\mathbb{R})$ such that
\begin{equation}\label{1.5}
\lim_{t \searrow -\infty} \| u(t) - e^{it \partial_{xx}} u_{-} \|_{L^{2}(\mathbb{R})} = 0.
\end{equation}
Equation $(\ref{1.1})$ is scattering for any $u_{0} \in L^{2}(\mathbb{R})$, or for $u_{0}$ in a specified subset of $L^{2}(\mathbb{R})$, if for any $u_{0} \in L^{2}(\mathbb{R})$ or the specified subset of $L^{2}(\mathbb{R})$, there exist $(u_{-}, u_{+}) \in L^{2}(\mathbb{R}) \times L^{2}(\mathbb{R})$ such that $(\ref{1.4})$ and $(\ref{1.5})$ hold, and additionally, $u_{-}$ and $u_{+}$ depend continuously on $u_{0}$.
\end{definition}

The qualitative global behavior for $(\ref{1.1})$ in the defocusing case $(\mu = +1)$ has now been completely worked out. Equation $(\ref{1.1})$ was proved to be globally well-posed and scattering for any initial data in $u_{0} \in L^{2}(\mathbb{R})$, see \cite{dodson2016global}.

In contrast, in the focusing case $(\mu = -1)$, the existence of non-scattering solutions to $(\ref{1.1})$ has been known for a long time, see \cite{glassey1977blowing}. The ground state of equation $(\ref{1.1})$ is
\begin{equation}\label{1.6}
Q(x) = (\frac{3}{\cosh(2x)^{2}})^{1/4}.
\end{equation}
Indeed, the function $Q(x)$ solves the elliptic partial differential equation
\begin{equation}\label{1.10}
Q_{xx} + Q^{5} = Q.
\end{equation}
Therefore, $e^{it} Q(x)$ gives a global solution to $(\ref{1.1})$ in the focusing case that does not scatter in either time direction. Furthermore, if $u(t,x)$ is a solution to $(\ref{1.1})$, then applying the pseudoconformal transformation to $u$,
\begin{equation}\label{1.11}
\frac{1}{t^{1/2}} \bar{u}(\frac{1}{t}, \frac{x}{t}) e^{i \frac{|x|^{2}}{4t}},
\end{equation}
is also a solution to $(\ref{1.1})$. Applying the pseudoconformal transformation to $e^{it} Q(x)$ gives a solution to $(\ref{1.1})$ that blows up in finite time.

Furthermore, the mass $\| Q \|_{L^{2}}$ represents a blowup threshold. In the case when $\| u_{0} \|_{L^{2}} < \| Q \|_{L^{2}}$ and $u_{0} \in H^{1}$, \cite{weinstein1983nonlinear} proved that $(\ref{1.1})$ has a global solution. This follows from conservation laws and the Gagliardo--Nirenberg inequality. A solution to $(\ref{1.1})$ has the conserved quantities mass, $(\ref{1.3})$, energy,
\begin{equation}\label{1.7}
E(u(t)) = \frac{1}{2} \int |u_{x}(t,x)|^{2} dx + \frac{\mu}{6} \int |u(t,x)|^{6} dx = E(u(0)),
\end{equation}
and momentum
\begin{equation}\label{1.8}
P(u(t)) = Im \int \nabla u(t,x) \overline{u(t,x)} dx = P(u(0)).
\end{equation}
When $\mu = +1$, $(\ref{1.7})$ is positive definite, so if $u_{0} \in H^{1}(\mathbb{R})$, then the energy gives an upper bound on $\| u(t) \|_{H^{1}}$ for any $t \in I$, which is enough to prove global well-posedness in the defocusing case. In the focusing case, the Gagliardo--Nirenberg inequality,
\begin{equation}\label{1.9}
\| f \|_{L^{6}(\mathbb{R})}^{6} \leq 3 (\frac{\| f \|_{L^{2}(\mathbb{R})}^{2}}{\| Q \|_{L^{2}(\mathbb{R})}^{2}})^{2} \| \partial_{x} f \|_{L^{2}(\mathbb{R})}^{2},
\end{equation}
and conservation of energy implies an upper bound on $\| u(t) \|_{H^{1}}$ for initial data $u_{0} \in H^{1}$ and $\| u_{0} \|_{L^{2}} < \| Q \|_{L^{2}}$. For initial data $u_{0} \in L^{2}$ satisfying $\| u_{0} \|_{L^{2}} < \| Q \|_{L^{2}}$, where $u_{0}$ need not lie in $H^{1}$, \cite{dodson2015global} proved global well-posedness and scattering.

It is conjectured that $u(t,x) = e^{it} Q(x)$ and its pseudoconformal transformation are the only non-scattering solutions to $(\ref{1.1})$ in the focusing case when $\| u_{0} \|_{L^{2}} = \| Q \|_{L^{2}}$, modulo symmetries of $(\ref{1.1})$. The symmetries of $(\ref{1.1})$ include the scaling symmetry, which has already been discussed $(\ref{1.2})$, translation in space and time,
\begin{equation}\label{1.13}
u(t - t_{0}, x - x_{0}), \qquad t_{0} \in \mathbb{R}, \qquad x_{0} \in \mathbb{R},
\end{equation}
phase transformation,
\begin{equation}\label{1.14}
\forall \theta_{0} \in \mathbb{R}, \qquad e^{i \theta_{0}} u(t, x),
\end{equation}
and the Galilean transformation,
\begin{equation}\label{1.15}
e^{i \frac{\xi_{0}}{2} (x - \frac{\xi_{0}}{2} t)} u(t, x - \xi_{0} t), \qquad \xi_{0} \in \mathbb{R}.
\end{equation}

This conjecture was answered in the affirmative for the focusing, mass-critical problem in all dimensions,
\begin{equation}\label{1.15.1}
i u_{t} + \Delta u = -|u|^{\frac{4}{d}} u, \qquad u(0,x) = u_{0}, \qquad u : I \times \mathbb{R}^{d} \rightarrow \mathbb{C},
\end{equation}
for finite time blowup solutions with finite energy initial data. See \cite{merle1992uniqueness} and \cite{merle1993determination}. This conjecture was also answered in the affirmative for a radially symmetric solution to $(\ref{1.15.1})$ in dimensions $d \geq 4$ that blow up in both time directions, but not necessarily in finite time. 

More recently, \cite{fan20182} proved a sequential convergence result for radially symmetric solutions that may only blow up in one time direction.
\begin{remark}
The pseudoconformal transformation of the solution $e^{it} Q(x)$ is a solution that blows up in one time direction but scatters in the other. By time reversal symmetry, it is possible to assume without loss of generality that the solution blows up forward in time.
\end{remark}
\begin{theorem}\label{t1.1}
Assume that $u$ is a radial solution to the focusing, mass-critical nonlinear Schr{\"o}dinger equation, $(\ref{1.15.1})$, with $\| u_{0} \|_{L^{2}} = \| Q \|_{L^{2}}$ which does not scatter forward in time. Let $(T^{-}(u), T^{+}(u))$ be its lifespan, $T^{-}(u)$ could be $-\infty$ and $T^{+}(u)$ could be $+\infty$. Then there exists a sequence $t_{n} \nearrow T^{+}(u)$ and a family of parameters $\lambda_{\ast, n}$, $\gamma_{\ast, n}$ such that
\begin{equation}\label{1.16}
\lambda_{\ast, n}^{d/2} u(t_{n}, \lambda_{\ast, n} x) e^{-i \gamma_{\ast, n}} \rightarrow Q, \qquad \text{in} \qquad L^{2}.
\end{equation}
\end{theorem}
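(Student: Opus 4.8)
The plan is to run the standard concentration--compactness machine on the sequence of solution snapshots $u(t_{n})$ for a suitable $t_{n} \nearrow T^{+}(u)$, and then to pin down the single surviving bubble as $Q$ by a variational argument. First I would record that, since $u$ does not scatter forward in time, the scattering criterion built into the local theory of \cite{cazenave1989some, cazenave1990cauchy} forces the forward Strichartz norm to diverge, $\| u \|_{L^{2(d+2)/d}_{t,x}([0, T^{+}(u)) \times \mathbb{R}^{d})} = \infty$, so that no tail $[t_{n}, T^{+}(u))$ has finite Strichartz norm. Mass conservation $(\ref{1.3})$ gives that $\{ u(t_{n}) \}$ is bounded in $L^{2}$, so I would apply the linear $L^{2}$ profile decomposition. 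Because $u$ is radial, the spatial translation and Galilean parameters in $(\ref{1.13})$ and $(\ref{1.15})$ may be normalized to zero, leaving only the scaling $(\ref{1.2})$, the phase $(\ref{1.14})$, and an internal time shift, so that after passing to a subsequence
\begin{equation*}
u(t_{n}) = \sum_{j=1}^{J} e^{i t_{n}^{j} \Delta}\big[ (\lambda_{n}^{j})^{-d/2} \phi^{j}(\cdot / \lambda_{n}^{j}) e^{i \gamma_{n}^{j}} \big] + w_{n}^{J},
\end{equation*}
with the usual asymptotic orthogonality of the parameters $(\lambda_{n}^{j}, t_{n}^{j})$, the Pythagorean mass decoupling $\| Q \|_{L^{2}}^{2} = \sum_{j} \| \phi^{j} \|_{L^{2}}^{2} + \lim_{n} \| w_{n}^{J} \|_{L^{2}}^{2} + o_{J}(1)$, and $\limsup_{n} \| e^{it\Delta} w_{n}^{J} \|_{L^{2(d+2)/d}_{t,x}} \to 0$ as $J \to \infty$.

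Next I would show that exactly one profile survives and that it carries the full mass. There must be at least one profile: if the decomposition were trivial, then $w_{n}$ would have vanishing linear Strichartz norm, and the perturbation/stability lemma of the local theory would promote this to a finite nonlinear Strichartz norm on $[t_{n}, T^{+}(u))$, contradicting non-scattering. There can be at most one profile: if two profiles had positive mass, the decoupling would force each to have mass strictly below $\| Q \|_{L^{2}}$, so by the below-threshold scattering theorem \cite{dodson2015global} each associated nonlinear profile would scatter with finite Strichartz norm; feeding these back through the nonlinear superposition and stability argument would again give finite Strichartz norm for $u$, a contradiction. Hence there is a single profile $\phi$ with $\| \phi \|_{L^{2}} = \| Q \|_{L^{2}}$ and $\| w_{n} \|_{L^{2}} \to 0$. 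The internal time $t_{n}^{1}$ cannot run to the direction that would make the nonlinear profile scatter forward, since that too would transmit to $u$; it may therefore be taken to converge, and absorbing the residual linear flow and relabelling the scaling and phase as $\lambda_{\ast,n}, \gamma_{\ast,n}$ yields $\lambda_{\ast,n}^{d/2} u(t_{n}, \lambda_{\ast,n} x) e^{-i \gamma_{\ast,n}} \to \psi$ in $L^{2}$ with $\| \psi \|_{L^{2}} = \| Q \|_{L^{2}}$.

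The remaining and genuinely hard step is to identify $\psi$ with $Q$ up to the residual scaling and phase, thereby producing $(\ref{1.16})$. The bubble $\psi$ generates a non-scattering solution of mass exactly $\| Q \|_{L^{2}}$, and, crucially, it arises as a strong $L^{2}$ limit along the blow-up sequence, so its flow is almost periodic modulo symmetries; by the additional-regularity theory for such minimal-mass objects one has $\psi \in H^{1}$, which makes the energy $(\ref{1.7})$ and the sharp Gagliardo--Nirenberg inequality $(\ref{1.9})$ available. I would then argue that $E(\psi) = 0$: choosing $t_{n}$ along the concentration scale so that $\lambda_{\ast,n} \to 0$, the scaling law $E(\lambda^{d/2} f(\lambda \cdot)) = \lambda^{2} E(f)$ together with finiteness of the energy drives the energy of the rescaled snapshot to zero in the limit, while in the borderline soliton case $E = 0$ holds outright since $Q$ satisfies $(\ref{1.10})$. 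Because $\| \psi \|_{L^{2}} = \| Q \|_{L^{2}}$, the inequality $(\ref{1.9})$ forces $E(\psi) \geq 0$ with equality precisely on the scaling--phase orbit of $Q$; hence $E(\psi) = 0$ places $\psi$ on that orbit, and a final adjustment of $\lambda_{\ast,n}$ and $\gamma_{\ast,n}$ converts the convergence to $(\ref{1.16})$. The main obstacle is exactly this last step: extracting the almost-periodicity and the consequent $H^{1}$ regularity of the limiting bubble, and then establishing the vanishing of its energy, so that the equality case of Gagliardo--Nirenberg can be invoked; everything before it is the standard profile-decomposition machine.
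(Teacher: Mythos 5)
Your first two stages (forward Strichartz divergence, linear profile decomposition, exactly one profile of full mass by the below-threshold result of \cite{dodson2015global}, absorption of the internal time shifts, almost periodicity of the resulting minimal-mass flow via \cite{tao2008minimal}) reproduce faithfully what the paper does in Section 2 for its one-dimensional analogue, equations $(\ref{2.1})$--$(\ref{2.8})$ and Theorem $\ref{t2.1}$. One structural remark before the main objection: the paper does not try to show that the limit $\psi$ of the snapshots along the \emph{original} sequence $t_{n}$ is on the orbit of $Q$. Instead it flows the nonlinear profile $v$ with data $\psi$ and proves only that \emph{some} sequence of times $s_{m} \nearrow \sup(I)$ exists with $g(s_{m}) v(s_{m}) \rightarrow Q$ (Theorem $\ref{t2.2}$), then recombines via the two-parameter times $t_{n} + \lambda_{n}^{2} s_{m}$. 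Your version, which identifies $\psi$ itself with a rescaled $Q$, is a strictly stronger claim than the theorem needs, and it is exactly the part your argument cannot deliver.

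The genuine gap is your final step, which rests on two unjustified assertions. First, ``by the additional-regularity theory for such minimal-mass objects one has $\psi \in H^{1}$'': no such theorem exists for a general almost periodic minimal-mass blowup solution, which a priori lives only in $L^{2}$. Additional regularity of the type $(\ref{4.9})$ is known only in special regimes, e.g.\ the rapid cascade scenario where $\int \lambda(t)^{3}\, dt$ is controlled as in $(\ref{4.3})$, quoting \cite{dodson2016global}; if $H^{1}$ regularity together with $E(\psi) = 0$ were available for free, the full rigidity conjecture would essentially follow from the equality case of $(\ref{1.9})$, which is precisely what is open. Second, your energy-vanishing argument fails: the energy $(\ref{1.7})$ is not continuous under $L^{2}$ convergence (only weakly lower semicontinuous under weak $H^{1}$ convergence), $E(u(t_{n}))$ may simply be infinite since $u_{0} \in L^{2}$ only, and you cannot arrange $\lambda_{\ast, n} \rightarrow 0$ --- the soliton $e^{it}Q$ itself has constant concentration scale, so the scaling identity $E(\lambda^{d/2} f(\lambda \cdot)) = \lambda^{2} E(f)$ gives nothing. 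The paper's (and \cite{fan20182}'s) replacement for these two assertions is the actual core of the proof and is entirely absent from your proposal: a frequency-truncated interaction Morawetz/virial quantity, $(\ref{3.1})$ and $(\ref{4.19})$, with the Galilean normalization $(\ref{3.10})$--$(\ref{3.12})$ chosen pointwise in the bilinear variable, which after the estimates $(\ref{3.13})$--$(\ref{3.25})$ produces a sequence of times along which a spatially cut off, frequency-localized energy tends to zero, $(\ref{3.30})$--$(\ref{3.32})$; almost periodicity then upgrades this to weak $H^{1}$ and strong $L^{2} \cap L^{6}$ limits with $E(u_{0}) \leq 0$, and only then does the equality case of Gagliardo--Nirenberg apply. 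The general-$\lambda(t)$ case additionally requires the smoothing algorithm $\tilde{\lambda}(t)$ and the separate exclusion of the bounded-$\int \lambda^{3}$ scenario, Theorem $\ref{t4.1}$, where the regularity theory you invoked is legitimately available. Without a substitute for this Morawetz machinery, your last paragraph is a restatement of the difficulty rather than a proof of it.
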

In fact, \cite{fan20182} proved Theorem $\ref{t1.1}$ for a larger class of initial data, data which is symmetric across $d$ linearly independent hyperplanes. In one dimension, there is no difference between radial initial data and symmetric initial data, but there is in higher dimensions.

In this paper we remove the symmetry assumption in dimension one. In doing so, we must allow for translation and Galilean symmetries, not just scaling and phase transformation symmetries.
\begin{theorem}\label{t1.2}
Assume $u$ is solution to $(\ref{1.1})$ with $\| u_{0} \|_{L^{2}} = \| Q \|_{L^{2}}$ and $\mu = -1$, which does not scatter forward in time. Let $(T^{-}(u), T^{+}(u))$ be its lifespan, $T^{-}(u)$ could be $-\infty$ and $T^{+}(u)$ could be $+\infty$. Then there exists a sequence $t_{n} \nearrow T^{+}(u)$ and a family of parameters $\lambda_{\ast, n}$, $\gamma_{\ast, n}$, $\xi_{\ast, n}$, $x_{\ast, n}$ such that
\begin{equation}\label{1.17}
\lambda_{\ast, n}^{1/2} e^{ix \xi_{\ast, n}} u(t_{n}, \lambda_{\ast, n} x + x_{\ast, n}) e^{-i \gamma_{\ast, n}} \rightarrow Q, \qquad \text{in} \qquad L^{2}.
\end{equation}
\end{theorem}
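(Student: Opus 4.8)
The plan is to run the concentration--compactness and linear profile decomposition argument of \cite{fan20182}, but in its full translation-- and Galilean--covariant form, so that no symmetry of the initial data is assumed. Since $u$ does not scatter forward in time, its scattering norm $\| u \|_{L^{6}_{t,x}([0, T^{+}(u)) \times \mathbb{R})}$ is infinite; this is the only hypothesis used and it covers both the finite-time ($T^{+} < \infty$) and infinite-time ($T^{+} = \infty$) cases simultaneously. Fixing an arbitrary sequence $t_{n} \nearrow T^{+}(u)$, the sequence $\{ u(t_{n}) \}$ is bounded in $L^{2}$ with $\| u(t_{n}) \|_{L^{2}} = \| Q \|_{L^{2}}$, and I would apply the one-dimensional $L^{2}$ profile decomposition for the free Schr{\"o}dinger group, written relative to the full symmetry group of $(\ref{1.1})$:
\begin{equation}
u(t_{n}) = \sum_{j=1}^{J} (\lambda_{n}^{j})^{-1/2} \, e^{i x \xi_{n}^{j}} \Big( e^{i s_{n}^{j} \partial_{xx}} \phi^{j} \Big) \Big( \tfrac{x - x_{n}^{j}}{\lambda_{n}^{j}} \Big) + w_{n}^{J},
\end{equation}
where the parameters $(\lambda_{n}^{j}, x_{n}^{j}, \xi_{n}^{j}, s_{n}^{j})$ are pairwise asymptotically orthogonal and the remainder $w_{n}^{J}$ has asymptotically vanishing Strichartz norm as $J \to \infty$. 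It is here that the extra parameters $x_{n}^{j}$ (translation) and $\xi_{n}^{j}$ (Galilean) enter; in the radial setting of \cite{fan20182} these may be suppressed, and recovering them is the principal new ingredient.

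The next step is to show that exactly one profile survives and carries the full mass. The Pythagorean expansion $\| Q \|_{L^{2}}^{2} = \sum_{j} \| \phi^{j} \|_{L^{2}}^{2} + \lim_{n} \| w_{n}^{J} \|_{L^{2}}^{2}$ forces $\| \phi^{j} \|_{L^{2}} \le \| Q \|_{L^{2}}$ for every $j$. If all profiles had mass strictly below $\| Q \|_{L^{2}}$, then each associated nonlinear profile would scatter in both time directions by the sub-threshold global well-posedness and scattering theorem \cite{dodson2015global}, and the nonlinear profile decomposition together with the stability theory for $(\ref{1.1})$ would force $u$ itself to scatter forward in time --- a contradiction. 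Hence some profile, say $\phi^{1}$, satisfies $\| \phi^{1} \|_{L^{2}} = \| Q \|_{L^{2}}$, which by mass decoupling annihilates all other profiles and sends $\| w_{n}^{J} \|_{L^{2}} \to 0$. One also verifies, as in the standard analysis, that the time parameter $s_{n}^{1}$ remains bounded --- otherwise the surviving profile would be asymptotically free in the relevant direction, again yielding forward scattering --- so it may be normalized to $0$. Setting $\lambda_{\ast,n} = \lambda_{n}^{1}$, $x_{\ast,n} = x_{n}^{1}$, $\xi_{\ast,n} = \xi_{n}^{1}$, and choosing $\gamma_{\ast,n}$ to match the residual phase, the decomposition collapses to
\begin{equation}
\lambda_{\ast,n}^{1/2} \, e^{i x \xi_{\ast,n}} u(t_{n}, \lambda_{\ast,n} x + x_{\ast,n}) e^{-i \gamma_{\ast,n}} \rightharpoonup \phi^{1} \quad \text{weakly in } L^{2}.
\end{equation}
Because the $L^{2}$ norms of the left-hand side are constant and equal to $\| \phi^{1} \|_{L^{2}}$, the weak convergence upgrades to strong convergence in $L^{2}$.

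It remains to identify $\phi^{1}$ with $Q$. The surviving nonlinear profile is a critical-mass solution that does not scatter, and the identification reduces, at the variational level, to the assertion that a critical-mass function forced to saturate the Gagliardo--Nirenberg inequality $(\ref{1.9})$ must solve the elliptic equation $(\ref{1.10})$ and hence equal $Q$ modulo the symmetries $(\ref{1.2})$, $(\ref{1.13})$--$(\ref{1.15})$; absorbing this fixed residual symmetry into a redefinition of $\lambda_{\ast,n}, x_{\ast,n}, \xi_{\ast,n}, \gamma_{\ast,n}$ then yields $(\ref{1.17})$. I expect the main obstacle to be precisely this last identification in the $L^{2}$ category: since $u_{0}$ is assumed only in $L^{2}$, the energy $(\ref{1.7})$ and the Gagliardo--Nirenberg functional are not a priori defined, so one must first bootstrap additional regularity (an $H^{1}$, or at least $\dot{H}^{s}$, bound) for the concentrating profile from its almost-periodicity before the variational rigidity can be invoked. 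The secondary difficulty, and the genuinely new one relative to \cite{fan20182}, is the bookkeeping of the non-compact translation and Galilean parameters: one must rule out the frequency center $\xi_{n}^{1}$ and spatial center $x_{n}^{1}$ escaping in a manner incompatible with a single concentrating bubble, which requires the full Galilean-covariant refinement of the profile decomposition and careful control of the orthogonality relations.
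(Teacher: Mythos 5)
Your first half---the $L^{2}$ profile decomposition at a sequence $t_{n} \nearrow T^{+}(u)$ with the full translation/Galilean symmetry group, the survival of a single profile of mass exactly $\| Q \|_{L^{2}}$ via the sub-threshold scattering theorem of \cite{dodson2015global} plus stability, the exclusion of the time parameters $s_{n}^{1} \rightarrow \pm \infty$, and the upgrade from weak to strong $L^{2}$ convergence---is essentially the paper's Section 2, equations $(\ref{2.1})$--$(\ref{2.3})$. The genuine gap is your final step: you assert that the surviving profile $\phi^{1}$ must saturate the Gagliardo--Nirenberg inequality $(\ref{1.9})$ and hence equal $Q$ modulo symmetries. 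Nothing in the argument supplies this. All the decomposition gives is that $\phi^{1}$ is the initial datum of a minimal-mass solution $v$ blowing up in both time directions; by \cite{tao2008minimal} such a $v$ is almost periodic modulo symmetries in the sense of $(\ref{2.8})$, but its datum need not lie in $H^{1}$, its energy is not a priori defined (let alone zero), and the assertion that every such datum is $Q$ up to symmetries is precisely the rigidity conjecture the introduction describes as open. Indeed, since you fixed an \emph{arbitrary} sequence $t_{n}$, your plan, if it worked, would prove convergence along every sequence, which is strictly stronger than the theorem (which only claims \emph{some} sequence) and is not known. Your closing remark that one should ``bootstrap $H^{1}$ regularity from almost periodicity'' names the obstruction but offers no mechanism: compactness alone gives membership of the renormalized orbit in a fixed $L^{2}$-precompact set $K$, whose elements need not be $Q$ or even have finite energy.

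What actually closes this gap in the paper is Theorem \ref{t2.2}, proved in Sections 3 and 4: one runs the \emph{nonlinear} flow of the profile and establishes, via an interaction Morawetz estimate $(\ref{3.1})$, respectively $(\ref{4.19})$ for general $\lambda(t)$---with the Galilean parameter $\xi(s)$ chosen in $(\ref{3.10})$ to cancel the local momentum, the truncation $I = P_{\leq T}$ to render the energy finite, spatial cutoffs at scale $R$, and the smoothing algorithm $\tilde{\lambda}(t)$ of \cite{dodson2015global}---that along a suitably \emph{constructed} sequence $s_{m} \nearrow \sup(I)$ the localized, frequency-truncated energy tends to zero, as in $(\ref{3.30})$, while the localized mass tends to $\| Q \|_{L^{2}}$. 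Only at that point does the Gagliardo--Nirenberg inequality apply, forcing the almost-periodicity-supplied strong limit to be $\lambda^{1/2} Q(\lambda(x - x_{0}))$. A separate argument, Theorem \ref{t4.1}, first excludes the rapid cascade scenario $\int_{0}^{T_{n}} \lambda(t)^{3} dt \lesssim \sup_{t \in [0,T_{n}]} \lambda(t)$ using the additional-regularity machinery of \cite{dodson2016global}, which is needed before the Morawetz argument can run. Finally, since the good times live on the profile side, the paper transfers the convergence back to $u$ by perturbation theory (Theorem \ref{t2.1}), producing the times $t_{n} + \lambda_{n}^{2} s_{m} \nearrow T^{+}(u)$; your proposal omits this transfer because it tries, without justification, to identify the profile itself with $Q$.
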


We can also extend the result of \cite{fan20182} of weak convergence of solutions with mass slightly above the mass of the ground state,
\begin{equation}\label{1.18}
\| Q \|_{L^{2}} < \| u_{0} \|_{L^{2}} \leq \| Q \|_{L^{2}} + \alpha, \qquad \text{for some} \qquad \alpha > 0 \qquad \text{small}.
\end{equation}
\begin{theorem}\label{t1.3}
Assume $u$ is a solution to $(\ref{1.1})$ with $u_{0}$ satisfying $(\ref{1.18})$ and $\mu = -1$, which does not scatter forward in time. Let $(T^{-}(u), T^{+}(u))$ be the lifespan of the solution. Then there exists a sequence of times $t_{n} \nearrow T^{+}(u)$ and a family of parameters $\lambda_{\ast, n}$, $\gamma_{\ast, n}$, $\xi_{\ast, n}$, $x_{\ast, n}$ such that
\begin{equation}\label{1.19}
\lambda_{\ast, n}^{1/2} e^{ix \xi_{\ast, n}} u(t_{n}, \lambda_{\ast, n} x + x_{\ast, n}) e^{-i \gamma_{\ast, n}} \rightharpoonup Q, \qquad \text{weakly in} \qquad L^{2}.
\end{equation}
\end{theorem}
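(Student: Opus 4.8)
The plan is to adapt the concentration-compactness argument behind Theorem~\ref{t1.2}, relaxing the conclusion to weak convergence so as to absorb the surplus mass permitted by \eqref{1.18}. First I would convert the non-scattering hypothesis into a concentration of at least the critical mass. Since scattering for \eqref{1.1} is equivalent to finiteness of the spacetime norm $\|u\|_{L^{6}_{t,x}([0,T^{+}(u))\times\mathbb{R})}$, the hypothesis forces this norm to be infinite as $t \nearrow T^{+}(u)$. Feeding a sequence $t_{n}\nearrow T^{+}(u)$ into the $L^{2}$ linear profile decomposition for $e^{it\partial_{xx}}$ — which in one dimension carries scaling, spatial-translation, Galilean (frequency), phase, and time-translation parameters — and invoking the nonlinear perturbation/stability theory, I would isolate a single profile $\phi$ whose nonlinear evolution fails to scatter; this is the bubble responsible for the blow-up of the scattering norm, and the non-scattering keeps its time-translation parameter bounded. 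Normalizing by the symmetry parameters $(\lambda_{\ast,n},\xi_{\ast,n},x_{\ast,n},\gamma_{\ast,n})$ attached to this profile, the rescaled data $\lambda_{\ast,n}^{1/2}e^{ix\xi_{\ast,n}}u(t_{n},\lambda_{\ast,n}x+x_{\ast,n})e^{-i\gamma_{\ast,n}}$ converge weakly in $L^{2}$ to $\phi$.

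Next I would pin the mass of $\phi$ from both sides. Because $\phi$ generates a non-scattering solution, the scattering result of \cite{dodson2015global} gives $\|\phi\|_{L^{2}}\ge\|Q\|_{L^{2}}$. Conversely, the almost-orthogonality of the $L^{2}$ masses in the profile decomposition, together with mass conservation \eqref{1.3} and the weak lower semicontinuity of the $L^{2}$ norm, yields $\|\phi\|_{L^{2}}^{2}\le\|u_{0}\|_{L^{2}}^{2}\le(\|Q\|_{L^{2}}+\alpha)^{2}$. Thus $\phi$ is a non-scattering profile whose mass lies within $O(\alpha)$ of the critical mass, while all remaining profiles and the radiation term together carry mass $O(\alpha)$.

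The decisive step is to identify $\phi$ with $Q$. Here I would invoke the variational rigidity of the ground state: the sharp Gagliardo--Nirenberg inequality \eqref{1.9} is saturated only by $Q$ modulo the symmetries of \eqref{1.1}, and the minimal-mass non-scattering object is exactly the soliton $e^{it}Q$. For $\alpha$ sufficiently small this rigidity, applied to the near-critical-mass non-scattering profile $\phi$ and combined with the machinery already developed for Theorem~\ref{t1.2} (which treats the endpoint $\|\phi\|_{L^{2}}=\|Q\|_{L^{2}}$ with strong convergence), should force the weak limit to equal $Q$ after the symmetry normalization; the surplus mass of size $O(\alpha)$ does not deform the limit because it resides at asymptotically divergent scales, positions, or frequencies and is therefore weakly null against the fixed profile $Q$.

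I expect this last step to be the main obstacle. Unlike the critical-mass case, strong convergence genuinely fails, so one cannot simply upgrade the weak limit to a strong one and read off $Q$; the entire difficulty is to show that the concentrating bubble, which a priori could be any non-scattering profile of mass in $[\|Q\|_{L^{2}},\|Q\|_{L^{2}}+\alpha]$, is precisely the ground state, and that the excess mass separates from it. Making the variational rigidity near the ground state quantitative, and verifying that the excess mass is genuinely orthogonal to the $Q$-bubble in the weak limit rather than perturbing its shape, is the crux of the argument.
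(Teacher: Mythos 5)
There is a genuine gap, and it sits exactly where you suspected: the identification of the limit with $Q$. Your sketch stops at "the rescaled data converge weakly to the non-scattering profile $\phi$," and then tries to force $\phi = Q$ by variational rigidity. This cannot work as stated, for two reasons. First, Gagliardo--Nirenberg rigidity \eqref{1.9} identifies $Q$ only among \emph{zero-energy, critical-mass} functions; your $\phi$ is merely an $L^{2}$ profile with mass in $[\|Q\|_{L^{2}}, \|Q\|_{L^{2}}+\alpha]$, with no $H^{1}$ regularity and no energy information whatsoever, and nothing in your argument produces a sequence of times along which the concentrating object has vanishing energy. In the paper this is the entire point of the interaction Morawetz machinery of Sections 3--4: one first passes (via Lemma \ref{l5.1}, i.e.\ Lemma 4.2 of \cite{fan20182}) to the almost periodic nonlinear profile $\Phi_{1}$, and runs the Galilean-normalized, frequency-truncated Morawetz estimate on it to extract times at which a spatially localized piece has energy tending to zero, yielding a \emph{strong} $L^{2}$ limit $u_{0}$ with $E(u_{0})=0$ and $\|u_{0}\|_{L^{2}} \geq \|Q\|_{L^{2}}$. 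Second, your appeal to "the minimal-mass non-scattering object is exactly the soliton" is precisely the conjecture the paper says is open in this setting (known only for finite-time, finite-energy blowup by Merle, and radially in $d \geq 4$); it is not an available tool, and in any case it would not handle the case $\|u_{0}\|_{L^{2}} > \|Q\|_{L^{2}}$, where the zero-energy object is genuinely \emph{not} $Q$.

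The missing idea is the Merle--Rapha{\"e}l rigidity for zero-energy solutions with mass slightly above critical (Theorem \ref{t5.2}): when $\|u_{0}\|_{L^{2}} > \|Q\|_{L^{2}}$ and $E(u_{0})=0$, the evolution of $u_{0}$ blows up in finite time by the log-log law with decomposition $\lambda(t)^{-1/2}(Q+\epsilon)((x-x(t))/\lambda(t))e^{i\gamma(t)}$, where $\epsilon \to 0$ in $\dot{H}^{1}$ and in $L^{2}$ locally (weight $e^{-|x|}$) but \emph{not} in $L^{2}$ globally --- the excess mass lives in $\epsilon$ and is only weakly null. This is exactly why the theorem's conclusion is weak rather than strong convergence; your heuristic that the surplus mass "resides at divergent scales in the decomposition" misplaces it: the surplus is intrinsic to the single profile, not in the remainder terms. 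Finally, the paper needs one more step you omit: a perturbation-plus-diagonalization argument (equations \eqref{5.8}--\eqref{5.11}) to transfer the weak convergence $g(t_{n}')u(t_{n}') \rightharpoonup Q$ along the log-log blowup of $u_{0}$ back to the original solution at times $t_{n} + t_{n}'/\lambda(t_{n})^{2}$, using that $g(t_{n})v(t_{n}) \to u_{0}$ strongly. To repair your proposal you would need to (i) import the Morawetz zero-energy extraction from the proof of Theorem \ref{t1.2} applied to the nonlinear profile, and (ii) replace the variational-rigidity step by Theorem \ref{t5.2} together with the diagonal transfer argument.
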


\section{A Preliminary reduction}
The scattering result of \cite{dodson2015global} (Theorem $1.7$) implies that $(\ref{1.1})$ scatters for $\| u_{0} \|_{L^{2}} < \| Q \|_{L^{2}}$, so a non-scattering solution to $(\ref{1.1})$ with $\| u_{0} \|_{L^{2}} = \| Q \|_{L^{2}}$ is a minimal mass blowup solution to $(\ref{1.1})$.
\begin{remark}
A blowup solution is a solution that fails to scatter. So $e^{it} Q$ is a blowup solution, even though it is global.
\end{remark}

Let $t_{n} \nearrow T^{+}(u)$ be a sequence of times. Making a profile decomposition, after passing to a subsequence, for all $J$,
\begin{equation}\label{2.1}
u(t_{n}) = \sum_{j = 1}^{J} g_{n}^{j} \phi^{j} + w_{n}^{J},
\end{equation}
where $g_{n}^{j}$ is the group action
\begin{equation}\label{2.1.1}
g_{n}^{j} \phi^{j} = \lambda_{n, j}^{1/2} e^{ix \xi_{n,j}} e^{i \gamma_{n,j}} \phi^{j}(\lambda_{n,j} x + x_{n,j}).
\end{equation}
Since $u$ is a minimal mass blowup solution, $\phi^{j} = 0$ for $j \geq 2$, $\| \phi^{1} \|_{L^{2}} = \| Q \|_{L^{2}}$, and $\| w_{n}^{J} \|_{L^{2}} \rightarrow 0$ as $n \rightarrow \infty$. See \cite{dodson2019defocusing} or \cite{tao2008minimal} for a detailed treatment of the profile decomposition for minimal mass blowup solutions. Thus, it will be convenient to drop the $j$ notation and simply write,
\begin{equation}\label{2.1.2}
u(t_{n}) = g_{n} \phi + w_{n}.
\end{equation}

Let $v$ be the solution to $(\ref{1.1})$ with initial data $\phi$, and let $I$ be the maximal interval of existence of $v$. Since
\begin{equation}\label{2.1.3}
\lim_{n \rightarrow \infty} \| u \|_{L_{t,x}^{6}((T^{-}(u), t_{n}) \times \mathbb{R})} = \infty, \qquad \text{and} \qquad \| u \|_{L_{t,x}^{6}((t_{n}, T^{+}(u)) \times \mathbb{R})} = \infty \qquad \forall n,
\end{equation}
\begin{equation}\label{2.2}
\| v \|_{L_{t,x}^{6}([0, \sup(I)) \times \mathbb{R})} = \| v \|_{L_{t,x}^{6}((\inf(I), 0] \times \mathbb{R})} = \infty.
\end{equation}
\begin{remark}
Equation $(\ref{2.1.3})$ is also the reason that it was unnecessary to allow for the possibility of terms like $[e^{it_{n}^{j} \Delta} \phi^{j}]$ in $(\ref{2.1})$ in place of $\phi^{j}$, where $t_{n}^{j} \rightarrow \pm \infty$.
\end{remark}

\begin{theorem}\label{t2.1}
To prove Theorem $\ref{t1.2}$, it suffices to prove that there exists a sequence $s_{m} \nearrow \sup(I)$, $s_{m} \geq 0$, such that
\begin{equation}\label{2.2.1}
g(s_{m}) v(s_{m}) \rightarrow Q, \qquad \text{in} \qquad L^{2},
\end{equation}
where $g(s_{m})$ is in the form of $(\ref{2.1.1})$.
\end{theorem}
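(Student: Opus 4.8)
The plan is to use the profile decomposition $(\ref{2.1.2})$ together with the long-time perturbation (stability) theory for $(\ref{1.1})$ to transfer the hypothesized convergence $(\ref{2.2.1})$ for $v$ back to the original solution $u$. The guiding idea is that, once the group action $g_n$ is undone, the solution $u$ near the time $t_n$ is a small $L^2$-perturbation of the profile solution $v$; hence, whenever $v(s_m)$ is close to $Q$ after a symmetry, so is $u$ at a suitably rescaled nearby time.

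First I would set $\phi_n := g_n^{-1} u(t_n)$. Since the group action $(\ref{2.1.1})$ is an $L^2$-isometry and $\| w_n \|_{L^2} \to 0$, equation $(\ref{2.1.2})$ gives $\phi_n \to \phi$ in $L^2$. Let $\tilde u_n$ be the solution to $(\ref{1.1})$ with initial data $\phi_n$. Because the scaling, translation, Galilean, and phase transformations $(\ref{1.2})$ and $(\ref{1.13})$--$(\ref{1.15})$ are symmetries of $(\ref{1.1})$ and generate a group, uniqueness of solutions shows that $\tilde u_n$ is, up to the corresponding solution-level symmetry $G_n$ (which rescales time by $\lambda_n^2$ and shifts space by the Galilean and translation parameters), exactly the time translate $u(t_n + \cdot)$. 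In particular, for each fixed $\tau$ in the lifespan of $\tilde u_n$, the function $\tilde u_n(\tau)$ is a fixed group transform of $u$ evaluated at the time $t_n + \lambda_n^2 \tau$.

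Next I would invoke stability. By $(\ref{2.2})$ the profile solution $v$ blows up only at the endpoints of $I$, so on every compact subinterval $[0,s] \subset [0, \sup(I))$ it has finite $L_{t,x}^6$ norm. Given $\phi_n \to \phi$ in $L^2$, the $L^2$-critical perturbation theory then yields $\sup_{t \in [0,s]} \| \tilde u_n(t) - v(t) \|_{L^2} \to 0$ as $n \to \infty$, so that $\tilde u_n$ is defined on $[0,s]$ for $n$ large and $\tilde u_n(s) \to v(s)$ in $L^2$. Combining this with the hypothesis $(\ref{2.2.1})$ through a diagonal argument: for each $m$ the interval $[0,s_m]$ is compact in $[0,\sup(I))$, so I may choose $n(m) \to \infty$ with $\| \tilde u_{n(m)}(s_m) - v(s_m) \|_{L^2} \to 0$; applying the isometric action $g(s_m)$ and using $(\ref{2.2.1})$ gives $g(s_m) \tilde u_{n(m)}(s_m) \to Q$ in $L^2$. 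Setting $\tilde t_m := t_{n(m)} + \lambda_{n(m)}^2 s_m$, which lies in the lifespan of $u$ and satisfies $t_{n(m)} \le \tilde t_m < T^+(u)$, one has $\tilde t_m \nearrow T^+(u)$ after passing to a subsequence. Since $\tilde u_{n(m)}(s_m)$ is a fixed group transform of $u(\tilde t_m)$ and the symmetry group is closed under composition, $g(s_m) \tilde u_{n(m)}(s_m)$ is again of the form $(\ref{2.1.1})$ applied to $u(\tilde t_m)$, producing parameters $\lambda_{\ast, m}, \xi_{\ast, m}, \gamma_{\ast, m}, x_{\ast, m}$ for which $(\ref{1.17})$ holds, proving Theorem $\ref{t1.2}$.

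I expect the main obstacle to be the stability step together with the bookkeeping of the composed symmetries. The $L^2$-critical perturbation theory is delicate, since $L^2$ is exactly the scaling-critical space, so justifying $\tilde u_n \to v$ on compact subintervals requires invoking the full minimal-mass perturbation machinery rather than a soft argument. One must also track carefully how the Galilean boost contributes a time-dependent spatial translation (a term proportional to $\xi \, s_m$), so that the composition $g(s_m) \circ G_{n(m)}^{-1}$ can be correctly read off as a single element of the form $(\ref{2.1.1})$ and the final parameters in $(\ref{1.17})$ identified.
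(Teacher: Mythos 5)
Your proposal is correct and follows essentially the same route as the paper: the paper likewise fixes $s_{m}$ with $\| g(s_{m}) v(s_{m}) - Q \|_{L^{2}} \leq 2^{-m}$, uses the $L^{2}$-critical perturbation theory to get $\| \lambda_{n}^{1/2} e^{-i \xi_{n}^{2} s_{m}} e^{i \xi_{n} x} e^{i \gamma_{n}} u(t_{n} + \lambda_{n}^{2} s_{m}, \lambda_{n} x + x_{n} - 2 \xi_{n} \lambda_{n} s_{m}) - v(s_{m}) \|_{L^{2}} \leq C(s_{m}) \| g_{n}^{-1} u(t_{n}) - \phi \|_{L^{2}}$ (making explicit the Galilean drift you flagged), composes $g(s_{m})$ with this transform into a single group element $g_{n,m}$ of the form $(\ref{2.1.1})$, and concludes via a diagonal limit with $t_{n} + \lambda_{n}^{2} s_{m} \nearrow T^{+}(u)$. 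The only difference is presentational: you phrase the comparison via auxiliary solutions $\tilde{u}_{n}$ with data $g_{n}^{-1} u(t_{n})$, while the paper writes the transformed time-translate of $u$ directly.
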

\begin{proof}
For any $m$ let $s_{m} \in I$ be such that
\begin{equation}\label{2.4}
\| g(s_{m}) v(s_{m}) - Q \|_{L^{2}} \leq 2^{-m}.
\end{equation}
Next, observe that $(\ref{2.1})$ implies
\begin{equation}\label{2.3}
e^{i \xi_{n} x} e^{i \gamma_{n}} \lambda_{n}^{1/2} u(t_{n}, \lambda_{n} x + x_{n}) \rightarrow \phi, \qquad \text{in} \qquad L^{2},
\end{equation}
and by perturbation theory, for a fixed $m$, for $n$ sufficiently large,
\begin{equation}\label{2.5}
\aligned
\| \lambda_{n}^{1/2} e^{-i \xi_{n}^{2} s_{m}} e^{i \xi_{n} x} e^{i \gamma_{n}} u(t_{n} + \lambda_{n}^{2} s_{m}, \lambda_{n} x + x_{n} - 2 \xi_{n} \lambda_{n} s_{m}) - v(s_{m}) \|_{L^{2}} \\ \leq C(s_{m}) \| e^{i \xi_{n} x} e^{i \gamma_{n}} \lambda_{n}^{1/2} u(t_{n}, \lambda_{n} x + x_{n}) - \phi \|_{L^{2}}.
\endaligned
\end{equation}
Therefore, by $(\ref{2.4})$, $(\ref{2.5})$, and the triangle inequality,
\begin{equation}\label{2.6}
\aligned
\| g(s_{m})(\lambda_{n}^{1/2} e^{-i \xi_{n}^{2} s_{m}} e^{i \xi_{n} x} e^{i \gamma_{n}} u(t_{n} + \lambda_{n}^{2} s_{m}, \lambda_{n} x + x_{n} - 2 \xi_{n} \lambda_{n} s_{m})) - Q \|_{L^{2}} \\ \leq C(s_{m}) \| e^{i \xi_{n} x} e^{i \gamma_{n}} \lambda_{n}^{1/2} u(t_{n}, \lambda_{n} x + x_{n}) - \phi \|_{L^{2}} + 2^{-m}.
\endaligned
\end{equation}
Since $g(s_{m})$ is also of the form $(\ref{2.1.1})$, there exists a group action $g_{n,m}$ of the form $(\ref{2.1.1}$ such that
\begin{equation}\label{2.6.1}
g(s_{m})(\lambda_{n}^{1/2} e^{-i \xi_{n}^{2} s_{m}} e^{i \xi_{n} x} e^{i \gamma_{n}} u(t_{n} + \lambda_{n}^{2} s_{m}, \lambda_{n} x + x_{n} - 2 \xi_{n} \lambda_{n} s_{m})) = g_{n,m} u(t_{n} + \lambda_{n}^{2} s_{m}, x).
\end{equation}
Equation $(\ref{2.6})$ implies
\begin{equation}\label{2.7}
\lim_{m, n \rightarrow \infty} \| g_{n,m} u(t_{n} + \lambda_{n}^{2} s_{m}, x)  - Q \|_{L^{2}} = 0.
\end{equation}
Since $t_{n} \nearrow T^{+}(u)$ and $s_{m} \geq 0$, $t_{n} + \lambda_{n}^{2} s_{m} \nearrow T^{+}(u)$.
\end{proof}

Now then, since we know that $v(s)$ blows up in both time directions, $(\ref{2.2})$ holds, and $\| v \|_{L^{2}} = \| Q \|_{L^{2}}$, Theorem $1.13$ of \cite{tao2008minimal} implies that $v$ is almost periodic. That is, for all $s \in I$, there exist $\lambda(s) > 0$, $\xi(s) \in \mathbb{R}$, $x(s) \in \mathbb{R}$, and $\gamma(s) \in \mathbb{R}$ such that
\begin{equation}\label{2.8}
\lambda(s)^{-1/2} e^{ix \xi(s)} e^{i \gamma(s)} v(s, \frac{x - x(s)}{\lambda(s)}) \in K,
\end{equation}
where $K$ is a fixed precompact subset of $L^{2}$. It only remains to prove sequential convergence to $Q$ for this solution $v$.
\begin{theorem}\label{t2.2}
There exists a sequence $s_{m} \nearrow \sup(I)$ and a sequence of group actions $g(s_{m})$ of the form $(\ref{2.1.1})$ such that
\begin{equation}\label{2.9}
\| g(s_{m}) v(s_{m}) - Q \|_{L^{2}} \rightarrow 0.
\end{equation}
\end{theorem}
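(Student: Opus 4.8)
The plan is to combine the precompactness coming from almost periodicity $(\ref{2.8})$ with the variational characterization of $Q$: at mass $\| Q \|_{L^{2}}$, the Gagliardo--Nirenberg inequality $(\ref{1.9})$ forces $E \geq 0$, with equality exactly on the orbit of $Q$ under the symmetries $(\ref{1.2})$, $(\ref{1.13})$, $(\ref{1.14})$, $(\ref{1.15})$. Thus the entire task reduces to extracting, along some $s_{m} \nearrow \sup(I)$, a limit profile whose energy is $\leq 0$.

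First I would set up the compactness. Writing $\tilde{v}(s) = \lambda(s)^{-1/2} e^{i x \xi(s)} e^{i\gamma(s)} v(s, \frac{x - x(s)}{\lambda(s)})$, which is a group action of the form $(\ref{2.1.1})$ applied to $v(s)$, almost periodicity $(\ref{2.8})$ places the whole family in the fixed precompact set $K \subset L^{2}$. For any sequence $s_{m} \nearrow \sup(I)$, after passing to a subsequence, $\tilde{v}(s_{m}) \to \psi$ strongly in $L^{2}$, and conservation of mass gives $\| \psi \|_{L^{2}} = \| Q \|_{L^{2}}$. Since $\tilde{v}(s_{m}) = g(s_{m}) v(s_{m})$ is itself of the form $(\ref{2.1.1})$ and the orbit of $Q$ is preserved under composition with such actions, it suffices to show that $\psi$ lies in the orbit of $Q$, after which the residual symmetry is absorbed into $g(s_{m})$ to give $(\ref{2.9})$.

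The central step is to give the energy a meaning and a sign. I would first upgrade the $L^{2}$ precompactness of $K$ to $H^{1}$: minimal-mass almost periodic solutions carry extra regularity, which in one dimension can be obtained from long-time Strichartz estimates together with a frequency-localized interaction Morawetz estimate, and which simultaneously yields uniform spatial decay, hence finite variance, of the renormalized family. With $\psi \in H^{1} \cap L^{6}$ the energy is lower semicontinuous along the strong $L^{2}$, $H^{1}$-bounded limit, and it remains to produce $E(\psi) \leq 0$. Here I would use the Galilean freedom $(\ref{1.15})$, which is absent in the radial setting of \cite{fan20182}, to normalize $\xi(s_{m}) \to 0$ so that $\psi$ carries zero momentum $(\ref{1.8})$ and the variational inequality applies with no Galilean correction. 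I then distinguish cases according to the scaling parameter. In the concentration regime, where $\lambda(s_{m}) \to \infty$ along the chosen sequence, the $\lambda^{-2}$ scaling of the energy drives the renormalized energy to a non-positive limit, once the momentum is normalized so the Galilean correction does not interfere, yielding $E(\psi) \leq 0$. In the remaining regime, where $\lambda(s)$ stays bounded near $\sup(I)$, the recentered family has uniformly bounded variance, and a virial identity for the centered variance $\int |x - x(s)|^{2} |v(s)|^{2}\, dx$ — whose second derivative is, modulo momentum and modulation corrections, a positive multiple of the energy — is incompatible with strictly positive energy, again forcing $E(\psi) \leq 0$. In either case $E(\psi) = 0$, so $\psi$ is in the orbit of $Q$.

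The main obstacle is exactly this passage from the blowup hypothesis $(\ref{2.2})$ to the sign of the limiting energy in the absence of symmetry and of a priori $H^{1}$ data. The regularity-and-decay upgrade that makes $E(\psi)$ and the virial identity meaningful is the delicate analytic core at the $L^{2}$-critical level, and the simultaneous bookkeeping of the four modulation parameters $\lambda, \gamma, \xi, x$ — in particular reconciling the Galilean normalization of the momentum with the translation parameter $x(s)$ and the scaling behaviour of $\lambda(s)$ in the virial computation — is where the genuine difficulty of removing the symmetry assumption resides.
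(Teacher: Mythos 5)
The decisive gap is the step where you ``upgrade the $L^{2}$ precompactness of $K$ to $H^{1}$'' with uniform decay and finite variance. No such regularity is available for a general minimal-mass almost periodic solution, and the paper never proves or uses it: the only place extra regularity enters is Theorem $\ref{t4.1}$, where the bound $(\ref{4.9})$ is derived \emph{under} the rapid-cascade hypothesis $(\ref{4.3})$, i.e. $\int_{0}^{T_{n}} \lambda(t)^{3} dt \lesssim \sup_{t \in [0, T_{n}]} \lambda(t)$, which is precisely the degenerate scenario being excluded, not the generic one. Everywhere else the paper works with the frequency truncation $Iu = P_{\leq T} u$ and a spatially localized energy $E(\chi(\cdot/R) v_{s})$, and proves only a spacetime-\emph{averaged} smallness, $(\ref{3.25})$ and $(\ref{4.28})$, along a sequence of times; the limit object $u_{0}$ acquires $H^{1}$ membership from $(\ref{3.30})$--$(\ref{3.34})$, not the solution itself. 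By asserting the $H^{1}$-plus-finite-variance upgrade you are assuming something at least as strong as what the Morawetz machinery is built to replace, and your two-case argument collapses without it: in the concentration regime, the statement that ``the $\lambda^{-2}$ scaling of the energy drives the renormalized energy to a non-positive limit'' presupposes $E(v) < \infty$, which is the very thing not known; and in the bounded regime the variance $\int |x - x(s)|^{2} |v(s,x)|^{2} dx$ need not be finite --- almost periodicity $(\ref{3.13.1})$ gives tail-mass smallness at scale $C(\eta)/\lambda(s)$, not second moments.

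Even granting finite energy and finite variance, your virial step buries the actual difficulty of the non-symmetric problem in the phrase ``modulo momentum and modulation corrections.'' A truncated virial centered at $x(s)$ produces error terms driven by the motion of $x(s)$ and by the boost $\xi(s)$, and without radial symmetry these modulation parameters are essentially uncontrolled (only $|\lambda'(t)| \lesssim \lambda(t)^{3}$ and $(\ref{4.11})$ are available); reconciling them is not bookkeeping but the heart of the theorem. The paper's resolution is structural rather than perturbative: it replaces the one-point virial by the interaction Morawetz quantity $(\ref{3.1})$, respectively $(\ref{4.19})$, which correlates two points and is therefore insensitive to translation, and it restores Galilean invariance by choosing the boost $\xi(s)$ \emph{pointwise in the translation parameter} $s$ so that the local momentum $(\ref{3.10})$ vanishes, cf.\ $(\ref{3.12})$ and $(\ref{4.30})$. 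The variable scaling is then handled by the smoothing algorithm producing $\tilde{\lambda}(t) \leq \lambda(t)$ with the four listed properties, the extra $\dot{\tilde{\lambda}}$ term being absorbed via $(\ref{4.35})$--$(\ref{4.38})$, while the complementary rapid-cascade regime --- the only place your regularity heuristic is actually valid --- is eliminated separately in Theorem $\ref{t4.1}$. Your proposal has no counterpart to either the translation- and Galilean-invariant monotonicity formula or the cascade exclusion, so as written it does not close; its correct ingredients (mass conservation pinning $\| \psi \|_{L^{2}} = \| Q \|_{L^{2}}$, and the endgame $E = 0$ plus Gagliardo--Nirenberg forcing $\psi$ into the orbit of $Q$) coincide with the paper's final step but do not reach it.
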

The proof of this fact will occupy the next two sections.
\begin{remark}
In order for notation to align with notation in prior works, such as \cite{dodson2015global}, it will be convenient to relabel so that $v$ is now denoted $u$, and $s$ now denoted $t$.
\end{remark}

\section{Proof of Theorem $\ref{t2.2}$ when $\lambda(t) = 1$}
When $\lambda(t) = 1$, the solution $u$ is global in both time directions, $I = \mathbb{R}$. As in \cite{dodson2015global} use the interaction Morawetz quantity
\begin{equation}\label{3.1}
M(t) = \int \int |Iu(t, y)|^{2} Im[\bar{Iu} Iu_{x}] \psi(x - y) dx dy,
\end{equation}
where $I$ is the Fourier truncation operator $P_{\leq T}$, where $T = 2^{k}$, where $k \in \mathbb{Z}_{\geq 0}$.
Here,
\begin{equation}\label{3.2}
\psi(x) = \int_{0}^{x} \phi(s) ds,
\end{equation}
where $\phi(s)$ is an even function given by
\begin{equation}\label{3.3}
\phi(x - y) = \frac{1}{R} \int \chi^{2}(\frac{x - y - s}{R}) \chi^{2}(\frac{s}{R}) ds = \frac{1}{R} \int \chi^{2}(\frac{x - s}{R}) \chi^{2}(\frac{s - y}{R}) ds = \frac{1}{R} \int \chi^{2}(\frac{x - s}{R}) \chi^{2}(\frac{y - s}{R}) ds,
\end{equation}
where $\chi$ is a smooth, compactly supported, even function, $\chi(x) = 1$ for $|x| \leq 1$ and $\chi(x)$ is supported on $|x| \leq 2$, and $\chi(x)$ is decreasing on the set $1 \leq x \leq 2$. $R$ is a large, fixed constant that will be allowed to go to infinity as $T \rightarrow \infty$.\medskip

By direct computation,
\begin{equation}\label{3.4}
\aligned
\frac{d}{dt} M(t) = -2 \int \int Im[\bar{Iu} Iu_{y}] Im[\bar{Iu} Iu_{x}] \phi(x - y) dx dy \\
+ \frac{1}{2} \int \int |Iu(t,y)|^{2} |Iu(t,y)|^{2} \phi''(x - y) dx dy \\
+ 2 \int \int |Iu(t,y)|^{2} |Iu_{x}(t,x)|^{2} \phi(x - y) dx dy \\
- \frac{2}{3} \int \int |Iu(t,y)|^{2} |Iu(t,x)|^{6} \phi(x - y) dx dy + \mathcal E,
\endaligned
\end{equation}
where $\mathcal E$ are the error terms arising from $\mathcal N$,
\begin{equation}\label{3.4.1}
i Iu_{t} + I u_{xx} + F(Iu) = F(Iu) - I F(u) = \mathcal N.
\end{equation}
It is known from Theorem $1.13$ of \cite{dodson2015global} that
\begin{equation}\label{3.4.2}
\int_{0}^{T} \mathcal N dt \lesssim R o(T),
\end{equation}
and
\begin{equation}\label{3.4.3}
\sup_{t \in [0, T]} |M(t)| \lesssim R o(T).
\end{equation}

By direct computation,
\begin{equation}\label{3.5}
\phi(x) = \frac{1}{R} \int \chi^{2}(\frac{x - s}{R}) \chi^{2}(\frac{s}{R}) ds \sim 1,
\end{equation}
for $|x| \leq R$, and $\phi(x)$ is supported on the set $|x| \leq 4R$. Finally, $\phi(x)$ is decreasing when $x \geq 0$. Therefore, $(\ref{3.2})$ implies that
\begin{equation}\label{3.6}
|\psi(x)| \lesssim R.
\end{equation}
Also, by direct computation,
\begin{equation}\label{3.7}
\phi''(x) = \frac{1}{R} \int \partial_{xx}(\chi^{2}(\frac{x - s}{R})) \chi^{2}(\frac{s}{R}) ds \lesssim \frac{1}{R^{2}}.
\end{equation}
Therefore,
\begin{equation}\label{3.8}
 \frac{1}{2} \int \int |Iu(t,y)|^{2} |Iu(t,y)|^{2} \phi''(x - y) dx dy \lesssim \frac{1}{R^{2}} \| u \|_{L^{2}}^{4}.
 \end{equation}

Next, decompose
\begin{equation}\label{3.9}
\aligned
-\int \int Im[\bar{Iu} Iu_{y}] Im[\bar{Iu} Iu_{x}] \phi(x - y) dx dy + \int \int |Iu(t,y)|^{2} |Iu_{x}(t,x)|^{2} \phi(x - y) dx dy \\
= \frac{1}{R} \int (\int \chi^{2}(\frac{y - s}{R}) Im[\bar{Iu} Iu_{y}])(\int \chi^{2}(\frac{x - s}{R}) Im[\bar{Iu} Iu_{x}]) ds \\ + \frac{1}{R} \int (\int \chi^{2}(\frac{y - s}{R}) |Iu(t,y)|^{2} dy)(\int \chi^{2}(\frac{x - s}{R}) |Iu_{x}(t,x)|^{2} dx) ds.
\endaligned
\end{equation}
Fix $s \in \mathbb{R}$. For any $\xi \in \mathbb{R}$,
\begin{equation}\label{3.10}
\int \chi^{2}(\frac{y - s}{R}) Im[\overline{e^{iy \xi} Iu} \partial_{y}(e^{iy \xi} Iu)] dy = \int \chi^{2}(\frac{y - s}{R}) Im[\bar{Iu} Iu_{y}] dy + \xi \int \chi^{2}(\frac{y - s}{R}) |Iu(t,y)|^{2} dy,
\end{equation}
and
\begin{equation}\label{3.11}
\int \chi^{2}(\frac{x - s}{R}) |\partial_{x}(e^{ix \xi} Iu)|^{2} dx = \xi^{2} \int \chi^{2}(\frac{x - s}{R}) |Iu|^{2} dx + 2 \xi \int \chi^{2}(\frac{x - s}{R}) Im[\bar{Iu} Iu_{x}] dx + \int \chi^{2}(\frac{x - s}{R}) |Iu_{x}|^{2} dx.
\end{equation}
Therefore, $(\ref{3.9})$ is invariant under the Galilean transformation $Iu \mapsto e^{ix \xi(s)} Iu$. This is not surprising since $(\ref{3.1})$ is invariant under the Galilean transformation $Iu \mapsto e^{ix \xi(s)} Iu$. Indeed, under the mapping $e^{ix \xi(s)} Iu$, since $\psi(x - y)$ is an odd function of $x - y$,
\begin{equation}
\aligned
M(t) \mapsto \int \int |Iu(t,y)|^{2} Im[\bar{Iu} Iu_{x}] \psi(x - y) dx dy + \xi(t) \int \int |Iu(t,y)|^{2} |Iu(t,x)|^{2} dx dy \\
= \int \int |Iu(t,y)|^{2} Im[\bar{Iu} Iu_{x}] \psi(x - y) dx dy.
\endaligned
\end{equation}

It is therefore convenient to choose $\xi(s)$ such that $(\ref{3.10}) = 0$. For notational convenience, let 
\begin{equation}\label{3.12}
v_{s} = e^{ix \xi(s)} Iu.
\end{equation}
Then by the fundamental theorem of calculus and $(\ref{3.4.2})$--$(\ref{3.12})$, if $R \nearrow \infty$ as $T \nearrow \infty$,
\begin{equation}\label{3.13}
\aligned
2 \int_{0}^{T} \frac{1}{R} \int (\int \chi^{2}(\frac{y - s}{R}) |v_{s}(t,y)|^{2}) (\int \chi^{2}(\frac{x - s}{R}) |\partial_{x}(v_{s})(t,x)|^{2} dx) ds dt \\
- \frac{2}{3} \int_{0}^{T} \frac{1}{R} \int (\int \chi^{2}(\frac{y - s}{R}) |v_{s}(t,y)|^{2}) (\int \chi^{2}(\frac{x - s}{R}) |v_{s}(t,x)|^{6} dx) ds dt \lesssim R o(T).
\endaligned
\end{equation}
By the Arzela--Ascoli theorem and $(\ref{2.8})$, for any $\eta > 0$, there exists $C(\eta) < \infty$ such that
\begin{equation}\label{3.13.1}
\int_{|x - x(t)| \geq \frac{C(\eta)}{\lambda(t)}} |u(t,x)|^{2} dx < \eta^{2}.
\end{equation}
By H{\"o}lder's inequality and $\lambda(t) = 1$,
\begin{equation}\label{3.14}
\frac{1}{6} \int_{|x - x(t)| \geq C(\eta)} \chi^{2}(\frac{x - s}{R}) |v_{s}|^{6} dx \lesssim \| \chi v_{s}^{2} \|_{L^{\infty}(|x - x(t)| \geq C(\eta))}^{2} (\int_{|x - x(t)| \geq C(\eta)} |v_{s}|^{2} dx).
\end{equation}

We can estimate $\| \chi v_{s}^{2} \|_{L^{\infty}(|x - x(t)| \geq C(\eta))}^{2}$ using an idea from \cite{merle2001existence}. By the fundamental theorem of calculus, for $|x - x(t)| \geq C(\eta)$,
\begin{equation}\label{3.15}
\aligned
|\chi v_{s}^{2}| \leq (\int 2 \chi |v| |v_{x}| dx + \frac{1}{R} \int |\chi'| |v|^{2} dx) \\ \lesssim (\int \chi^{2}(\frac{x - s}{R}) |\partial_{x}(v_{s})|^{2} dx)^{1/2} (\int_{|x - x(t)| \geq C(\eta)} |v|^{2})^{1/2} + \frac{1}{R} (\int_{|x - x(t)| \geq C(\eta)} |v|^{2}).
\endaligned
\end{equation}
Therefore,
\begin{equation}\label{3.16}
\aligned
(\ref{3.14}) \lesssim (\int \chi^{2}(\frac{x - s}{R}) |\partial_{x}(v_{s})|^{2} dx)(\int_{|x - x(t)| \geq C(\eta)} |v|^{2} dx)^{2} + \frac{1}{R} (\int_{|x - x(t)| \geq C(\eta)} |v|^{2})^{3} \\
\lesssim \eta^{4} (\int \chi^{2}(\frac{x - s}{R}) |\partial_{x}(v_{s})|^{2} dx) + \frac{1}{R} \eta^{6}.
\endaligned
\end{equation}
Therefore,
\begin{equation}\label{3.17}
\aligned
2 \int_{0}^{T} \frac{1}{R} \int (\int \chi^{2}(\frac{y - s}{R}) |v_{s}(t,y)|^{2}) (\int \chi^{2}(\frac{x - s}{R}) |\partial_{x}(v_{s})(t,x)|^{2} dx) ds dt \\
- \frac{2}{3} \int_{0}^{T} \frac{1}{R} \int (\int \chi^{2}(\frac{y - s}{R}) |v_{s}(t,y)|^{2}) (\int_{|x - x(t)| \leq C(\eta)} \chi^{2}(\frac{x - s}{R}) |v_{s}(t,x)|^{6} dx) ds dt \\ \lesssim R o(T) + \frac{\eta^{6}}{R} T + \frac{\eta^{4}}{R} \int_{0}^{T} \int (\int \chi^{2}(\frac{y - s}{R}) |v_{s}|^{2} dy) (\int \chi^{2}(\frac{x - s}{R}) |\partial_{x}(v_{s})|^{2} dx) ds dt.
\endaligned
\end{equation}
When $\eta > 0$ is sufficiently small,
\begin{equation}\label{3.26}
 \eta^{4} \int_{0}^{T} \int (\int \chi^{2}(\frac{y - s}{R}) |v_{s}|^{2} dy) (\int \chi^{2}(\frac{x - s}{R}) |\partial_{x}(v_{s})|^{2} dx) ds dt
 \end{equation}
 can be absorbed into the left hand side of $(\ref{3.17})$, proving that
 \begin{equation}\label{3.27}
\aligned
2 \int_{0}^{T} \frac{1}{R} \int (\int \chi^{2}(\frac{y - s}{R}) |v_{s}(t,y)|^{2}) (\int \chi^{2}(\frac{x - s}{R}) |\partial_{x}(v_{s})(t,x)|^{2} dx) ds dt \\
- \frac{2}{3} \int_{0}^{T} \frac{1}{R} \int (\int \chi^{2}(\frac{y - s}{R}) |v_{s}(t,y)|^{2}) (\int_{|x - x(t)| \leq C(\eta)} \chi^{2}(\frac{x - s}{R}) |v_{s}(t,x)|^{6} dx) ds dt \\ \lesssim R o(T) + \frac{\eta^{6}}{R} T + \frac{ \eta^{4}}{R} \int_{0}^{T} \int (\int \chi^{2}(\frac{y - s}{R}) |v_{s}(t,y)|^{2} dy) (\int_{|x - x(t)| \leq C(\eta)} \chi^{2}(\frac{x - s}{R}) |v_{s}(t,x)|^{6} dx) ds dt.
\endaligned
\end{equation}

Now choose $x(t) - 2 C(\eta) \leq x_{\ast} \leq x(t) + 2 C(\eta)$ such that
\begin{equation}\label{3.17.1}
\chi(\frac{x_{\ast} - s}{R}) = \inf_{x(t) - 2 C(\eta) \leq x \leq x(t) + 2 C(t)} \chi(\frac{x - s}{R}).
\end{equation}
By the fundamental theorem of calculus, when $x(t) - C(\eta) \leq x \leq x(t) + C(\eta)$,
\begin{equation}\label{3.18}
\chi^{2}(\frac{x - s}{R}) = \chi^{2}(\frac{x_{\ast} - s}{R}) + \frac{2}{R} \int_{x_{\ast}}^{x} \chi'(\frac{r - s}{R}) \chi(\frac{r - s}{R}) dr.
\end{equation}
When $|x - x(t)| \leq C(\eta)$, 
\begin{equation}\label{3.19}
\frac{2}{R} \int_{x_{\ast}}^{x} \chi'(\frac{r - s}{R}) \chi(\frac{r - s}{R}) dr \lesssim \frac{C(\eta)}{R},\end{equation}
so
\begin{equation}\label{3.20}
\aligned
\frac{1}{R} \int (\int \chi^{2}(\frac{y - s}{R}) |v_{s}(t,y)|^{2}) (\int_{|x - x(t)| \leq C(\eta)} \chi^{2}(\frac{x - s}{R}) |v_{s}(t,x)|^{6} dx) ds \\ \leq \frac{1}{R} \int (\int \chi^{2}(\frac{y - s}{R}) |v_{s}(t,y)|^{2}) (\int_{|x - x(t)| \leq C(\eta)} \chi^{2}(\frac{x_{\ast} - s}{R}) |v_{s}(t,x)|^{6} dx) ds \\
+ \frac{C(\eta)}{R^{2}} \int (\int \chi^{2}(\frac{y - s}{R}) |v_{s}(t,y)|^{2}) (\int_{|x - x(t)| \leq C(\eta)} |v_{s}(t,x)|^{6} dx) ds \\
=  \frac{1}{R} \int (\int \chi^{2}(\frac{y - s}{R}) |v_{s}(t,y)|^{2}) (\int_{|x - x(t)| \leq C(\eta)} \chi^{2}(\frac{x_{\ast} - s}{R}) |v_{s}(t,x)|^{6} dx) ds + O(\frac{C(\eta)}{R} \| v \|_{L^{2}}^{2} \| v \|_{L^{6}}^{6}).
\endaligned
\end{equation}

Plugging $(\ref{3.20})$ in to $(\ref{3.27})$,
\begin{equation}\label{3.21}
\aligned
2 \int_{0}^{T} \frac{1}{R} \int (\int \chi^{2}(\frac{y - s}{R}) |v_{s}(t,y)|^{2}) (\int \chi^{2}(\frac{x - s}{R}) |\partial_{x}(v_{s})(t,x)|^{2} dx) ds dt \\
- \frac{2}{3} \int_{0}^{T} \frac{1}{R} \int (\int \chi^{2}(\frac{y - s}{R}) |v_{s}(t,y)|^{2}) (\int_{|x - x(t)| \leq C(\eta)} \chi^{2}(\frac{x_{\ast} - s}{R}) |v_{s}(t,x)|^{6} dx) ds dt \\ \lesssim R o(T) + \frac{\eta^{6}}{R} T + \frac{C(\eta)}{R} \| u \|_{L_{t}^{\infty} L_{x}^{2}}^{2} \| u \|_{L_{t,x}^{6}}^{6} \\ + \frac{\eta^{4}}{R} \int_{0}^{T} \int (\int \chi^{2}(\frac{y - s}{R}) |v_{s}|^{2} dy) (\int_{|x - x(t)| \leq C(\eta)} \chi^{2}(\frac{x^{\ast} - s}{R}) |v_{s}|^{6} dx) ds dt .
\endaligned
\end{equation}
Since $\chi(\frac{x^{\ast} - 1}{R}) \leq 1$, 
\begin{equation}
\frac{\eta^{4}}{R} \int_{0}^{T} \int (\int \chi^{2}(\frac{y - s}{R}) |v_{s}|^{2} dy) (\int_{|x - x(t)| \leq C(\eta)} \chi^{2}(\frac{x^{\ast} - s}{R}) |v_{s}|^{6} dx) ds dt \lesssim \eta^{4} \| u \|_{L_{t}^{\infty} L_{x}^{2}}^{2} \| u \|_{L_{t,x}^{6}}^{6}.
\end{equation}
By definition of $x_{\ast}$ and $\chi$,
\begin{equation}\label{3.22}
\aligned
2 \int_{0}^{T} \frac{1}{R} \int (\int \chi^{2}(\frac{y - s}{R}) |v_{s}(t,y)|^{2}) (\int \chi^{2}(\frac{x - s}{R}) |\partial_{x}(v_{s})(t,x)|^{2} dx) ds dt \\
- \frac{2}{3} \int_{0}^{T} \frac{1}{R} \int (\int \chi^{2}(\frac{y - s}{R}) |v_{s}(t,y)|^{2}) (\int_{|x - x(t)| \leq C(\eta)} \chi^{2}(\frac{x_{\ast} - s}{R}) |v_{s}(t,x)|^{6} dx) ds dt \\
\geq 2 \int_{0}^{T} \frac{1}{R} \int (\int \chi^{2}(\frac{y - s}{R}) |v_{s}(t,y)|^{2}) ( \chi^{2}(\frac{x_{\ast} - s}{R}) \int \chi^{2}(\frac{x - x(t)}{R}) |\partial_{x}(v_{s})(t,x)|^{2} dx) ds dt \\
- \frac{2}{3} \int_{0}^{T} \frac{1}{R} \int (\int \chi^{2}(\frac{y - s}{R}) |v_{s}(t,y)|^{2}) (\chi^{2}(\frac{x_{\ast} - s}{R})\int \chi^{6}(\frac{x - x(t)}{R}) |v_{s}(t,x)|^{6} dx) ds dt.
\endaligned
\end{equation}
Integrating by parts,
\begin{equation}\label{3.23}
\int \chi^{2}(\frac{x - x(t)}{R}) |\partial_{x}(v_{s})|^{2} dx = \int |\partial_{x}(\chi(\frac{x - x(t)}{R}) v_{s})|^{2} dx + \frac{1}{R^{2}} \int \chi''(\frac{x - x(t)}{R}) \chi(\frac{x - x(t)}{R}) |v_{s}|^{2} dx.
\end{equation}
Therefore,
\begin{equation}\label{3.24}
\aligned
2 \int_{0}^{T} \frac{1}{R} \int (\int \chi^{2}(\frac{y - s}{R}) |v_{s}(t,y)|^{2}) ( \chi^{2}(\frac{x_{\ast} - s}{R}) \int \chi^{2}(\frac{x - x(t)}{R}) |\partial_{x}(v_{s})(t,x)|^{2} dx) ds dt \\
- \frac{2}{3} \int_{0}^{T} \frac{1}{R} \int (\int \chi^{2}(\frac{y - s}{R}) |v_{s}(t,y)|^{2}) (\chi^{2}(\frac{x_{\ast} - s}{R})\int \chi^{6}(\frac{x - x(t)}{R}) |v_{s}(t,x)|^{6} dx) ds dt \\
= 4 \int_{0}^{T} \frac{1}{R} \int (\int \chi^{2}(\frac{y - s}{R}) |v_{s}(t, y)|^{2} dy) \chi^{2}(\frac{x_{\ast} - s}{R}) E(\chi^{2}(\frac{x - x(t)}{R}) v) ds dt + \frac{T}{R^{2}} \| u \|_{L_{t}^{\infty} L_{x}^{2}}^{4}.
\endaligned
\end{equation}
Here $E$ is the energy given by $(\ref{1.7})$. Therefore, we have finally proved
\begin{equation}\label{3.25}
\aligned
4 \int_{0}^{T} \frac{1}{R} \int (\int \chi^{2}(\frac{y - s}{R}) |v_{s}(t, y)|^{2} dy) \chi^{2}(\frac{x_{\ast} - s}{R}) E(\chi^{2}(\frac{x - x(t)}{R}) v) ds dt \\ \lesssim R o(T) + \frac{\eta^{6}}{R} T + \eta^{4} \| u \|_{L_{t}^{\infty} L_{x}^{2}}^{2} \| u \|_{L_{t,x}^{6}}^{6} + \frac{C(\eta)}{R} \| u \|_{L_{t}^{\infty} L_{x}^{2}}^{2} \| u \|_{L_{t,x}^{6}}^{6}.
\endaligned
\end{equation}

Now by Strichartz estimates and $\lambda(t) = 1$, $\| u \|_{L_{t,x}^{6}([0, T] \times \mathbb{R})} \sim T$, so choosing $R \nearrow \infty$ perhaps very slowly as $T \nearrow \infty$, and then $\eta \searrow 0$ sufficiently slowly, the right hand side of $(\ref{3.25})$ is bounded by $o(T)$.\medskip

On the other hand, when $|s - x(t)| \leq \frac{R}{2}$, $\chi(\frac{x_{\ast} - s}{R}) = 1$ and
\begin{equation}\label{3.28}
(\int \chi^{2}(\frac{y - s}{R}) |v_{s}(t, y)|^{2} dy) \geq \frac{1}{2} \| v \|_{L^{2}}^{2}.
\end{equation}
Therefore, the left hand side of $(\ref{3.25})$ is bounded below by
\begin{equation}\label{3.29}
\| u_{0} \|_{L^{2}}^{2} \int_{0}^{T} \frac{1}{R} \int_{|s - x(t)| \leq \frac{R}{2}} E(\chi(\frac{x - x(t)}{R}) v_{s}) ds dt \lesssim o(T)
\end{equation}
Thus, taking a sequence $T_{n} \nearrow \infty$, $R_{n} \nearrow \infty$, $\eta_{n} \searrow 0$, there exists a sequence of times $t_{n} \in [\frac{T_{n}}{2}, T_{n}]$, $|s_{n} - x(t_{n})| \leq \frac{R_{n}}{2}$ such that
\begin{equation}\label{3.30}
E(\chi(\frac{x - s_{n}}{R_{n}}) e^{ix \xi(s_{n})} e^{i \gamma(s_{n})} P_{\leq T_{n}} u(t_{n}, x)) \rightarrow 0,
\end{equation}
\begin{equation}\label{3.31}
(1 - \chi(\frac{x - s_{n}}{R_{n}})) e^{ix \xi(s_{n})} e^{i \gamma(s_{n})} P_{\leq T_{n}} u(t_{n}, x) \rightarrow 0, \qquad \text{in} \qquad L^{2},
\end{equation}
\begin{equation}\label{3.31.1}
 (1 - P_{\geq T_{n}}) u(t_{n}, x) \rightarrow 0, \qquad \text{in} \qquad L^{2},
 \end{equation}
and
\begin{equation}\label{3.32}
\| \chi(\frac{x - s_{n}}{R_{n}}) e^{ix \xi(s_{n})} e^{i \gamma(s_{n})} P_{\leq T_{n}} u(t_{n}, x) \|_{L^{6}} \sim 1.
\end{equation}

Now by the almost periodicity of $u$, $(\ref{2.8})$, after passing to a subsequence, there exists $u_{0} \in H^{1}$ such that
\begin{equation}\label{3.33}
\chi(\frac{x + x(t_{n}) - s_{n}}{R_{n}}) e^{ix \xi(s_{n})} e^{i x(t_{n}) \xi(s_{n})} e^{i \gamma(s_{n})} P_{\leq T_{n}} u(t_{n}, x + x(t_{n})) \rightharpoonup u_{0},
\end{equation}
weakly in $H^{1}$, and
\begin{equation}\label{3.34}
\chi(\frac{x + x(t_{n}) - s_{n}}{R_{n}}) e^{ix \xi(s_{n})} e^{i x(t_{n}) \xi(s_{n})} P_{\leq T_{n}} u(t_{n}, x + x(t_{n})) \rightarrow u_{0},
\end{equation}
strongly in in $L^{2} \cap L^{6}$. Also, by $(\ref{3.30})$, $(\ref{3.31})$, and $(\ref{3.31.1})$, $\| u_{0} \|_{L^{2}} = \| Q \|_{L^{2}}$, $E(u_{0}) \leq 0$, and by the Gagliardo-Nirenberg inequality, $E(u_{0}) = 0$. Therefore,
\begin{equation}\label{3.36}
u_{0} = \lambda^{1/2} Q(\lambda(x - x_{0})),
\end{equation}
for some $\lambda \sim 1$ and $|x_{0}| \lesssim 1$. This proves Theorem $\ref{t2.2}$ when $\lambda(t) = 1$. $\Box$

\section{Proof of Theorem $\ref{t2.2}$ for a general $\lambda(t)$}
Now suppose that $\lambda(t)$ is free to vary. Recall that $|\lambda'(t)| \lesssim \lambda(t)^{3}$. In this case, 
\begin{equation}\label{4.1}
\lambda(t) : I \rightarrow (0, \infty),
\end{equation}
where $I$ is the maximal interval of existence of an almost periodic solution to $(\ref{1.1})$.
\begin{theorem}\label{t4.1}
Suppose $T_{n} \in I$, $T_{n} \rightarrow \sup(I)$ is a sequence of times in $I$. Then
\begin{equation}\label{4.2}
\lim_{T_{n} \rightarrow \sup(I)} \frac{1}{\sup_{t \in [0, T_{n}]} \lambda(t)} \cdot \int_{0}^{T_{n}} \lambda(t)^{3} dt = +\infty.
\end{equation}
\end{theorem}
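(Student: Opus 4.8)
The plan is to pass to the intrinsic time variable in which the frequency scale has a Lipschitz logarithm, reduce the claim to a clean statement about that scale, and then locate the genuinely dynamical content in the exclusion of a ``spreading'' regime.

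First I would change variables by $ds = \lambda(t)^{2}\,dt$ with $s(0)=0$. Because the solution does not scatter forward in time, $\|u\|_{L_{t,x}^{6}([0,\sup(I))\times\mathbb{R})}=\infty$, and almost periodicity $(\ref{2.8})$ gives $\int |u(t,x)|^{6}\,dx \sim \lambda(t)^{2}$; hence $S_{n}:=\int_{0}^{T_{n}}\lambda(t)^{2}\,dt \to \infty$. Setting $f(s)=\lambda(t(s))$, the bound $|\lambda'(t)|\lesssim \lambda(t)^{3}$ is exactly $|\tfrac{d}{ds}\log f|\lesssim 1$, so $\log f$ is Lipschitz on $[0,S_{n}]$ with a constant $C_{0}$ independent of $n$. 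Since $\int_{0}^{T_{n}}\lambda^{3}\,dt = \int_{0}^{S_{n}} f\,ds$ and $\sup_{[0,T_{n}]}\lambda = \sup_{[0,S_{n}]}f$, the assertion $(\ref{4.2})$ is equivalent to
\[
\frac{1}{\sup_{s\in[0,S_{n}]}f(s)}\int_{0}^{S_{n}} f(s)\,ds \longrightarrow \infty, \qquad S_{n}\to\infty.
\]

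I would argue by contradiction, supposing this ratio stays $\leq C$ along a subsequence. The Lipschitz bound on $\log f$ alone is not enough, since the profile $f(s)=e^{-C_{0}s}$ satisfies it yet has bounded ratio; the statement must therefore use the dynamics, and the only obstruction is a solution whose scale collapses, $\lambda(t)\to 0$ (spreading). The concentrating regime $\sup_{[0,T_{n}]}\lambda\to\infty$ is already favorable: near a near-maximizer, local constancy forces $\lambda\gtrsim \sup\lambda$ on a $t$-interval of length $\sim(\sup\lambda)^{-2}$, and for finite-time pseudoconformal-type blowup one checks $(\ref{4.2})$ directly. To isolate the spreading enemy I would exploit the scale invariance of the ratio in $(\ref{4.2})$: rescaling $u$ by $\Lambda_{n}=\sup_{[0,T_{n}]}\lambda$ and translating in time to a near-maximizer produces solutions with $\sup\tilde\lambda\sim 1$ and $\int_{0}^{\tilde T_{n}}\tilde\lambda^{3}\,dt\leq C$, and extracting an $L^{2}$ limit via $(\ref{2.8})$ and the stability theory yields a non-scattering, minimal-mass, almost periodic solution $u_{\infty}$ whose scale satisfies $\lambda_{\infty}\leq 1$ and $\lambda_{\infty}\in L_{t}^{3}([0,\infty))$, so that $\lambda_{\infty}(t)\to 0$.

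It remains, and this is \textbf{the main obstacle}, to rule out such a spreading solution. Here I would run the frequency-localized interaction Morawetz argument of Section 3 and of \cite{dodson2015global}: when $\lambda_{\infty}(t)\to 0$ the boundary term $(\ref{3.4.3})$ is negligible while the positive main term integrates to a quantity controlled by $\int \lambda_{\infty}^{3}\,dt<\infty$, which forces $\|u_{\infty}\|_{L_{t,x}^{6}}<\infty$ and hence scattering, contradicting the non-scattering of $u_{\infty}$. The delicate points are the uniform control of the error $\mathcal N$ from $(\ref{3.4.1})$--$(\ref{3.4.2})$ and of the Galilean parameter $\xi(t)$ through the rescaling and limiting procedure, together with the semicontinuity needed to transfer the $L_{t}^{3}$ bound on the scale to the limit; with the spreading solution excluded, the assumed bounded subsequence cannot exist, which proves $(\ref{4.2})$.
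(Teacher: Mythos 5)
Your setup coincides with the paper's up to the compactness step: arguing by contradiction, choosing a time $t_{n}$ where $\lambda$ attains its supremum on $[0,T_{n}]$, rescaling by $\lambda(t_{n})$, and using almost periodicity $(\ref{2.8})$ to extract a minimal-mass, doubly non-scattering almost periodic limit whose scale satisfies $\lambda(t)\leq 1$ on a half-line together with $\int \lambda(t)^{3}\,dt \leq C_{0}$ --- this is exactly $(\ref{4.7})$--$(\ref{4.8})$. The gap is in how you rule out this spreading solution. You propose to rerun the frequency-localized interaction Morawetz argument to conclude $\| u_{\infty} \|_{L_{t,x}^{6}} < \infty$ and hence scattering. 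This step fails at mass exactly $\| Q \|_{L^{2}}$: the main term of the Morawetz identity is a localized \emph{energy}, and at critical mass the sharp Gagliardo--Nirenberg inequality $(\ref{1.9})$ gives only $E(f) \geq 0$ with zero coercivity margin, so the Morawetz bound cannot control $\| u \|_{L_{t,x}^{6}}^{6}$, whose time integral scales like $\int \lambda(t)^{2}\,dt$; note also that $\lambda \in L_{t}^{3}$ with $\lambda \leq 1$ does not imply $\lambda \in L_{t}^{2}$ (take $\lambda(t) \sim (1+|t|)^{-1/2}$), so even heuristically the $L^{3}$ bound on the scale does not yield a finite scattering norm. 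More fundamentally, $e^{it}Q$ is itself a non-scattering, minimal-mass, almost periodic solution, so no Morawetz-type inequality alone can upgrade minimal-mass almost periodicity to scattering; what the Section 3 argument extracts from Morawetz is only a sequence of times at which a localized energy tends to zero, not spacetime integrability.

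The ingredient your proposal is missing --- and which is the actual content of the paper's proof --- is the \emph{additional regularity} of the rapid cascade: following \cite{dodson2016global}, the bound $\int_{-\infty}^{0} \lambda(t)^{3}\,dt \leq C_{0}$ yields uniform bounds $\| u \|_{L_{t}^{\infty} \dot{H}^{s}} \lesssim_{s} C_{0}^{s}$ for all $0 \leq s < 5$, as in $(\ref{4.9})$. Combined with $|\lambda'(t)| \lesssim \lambda(t)^{3}$ this forces $\lambda(t) \rightarrow 0$ as $t \searrow -\infty$, and since $|\xi'(t)| \lesssim \lambda(t)^{3}$ the Galilean parameter converges, so after a Galilean normalization, interpolation between the $L^{2}$ and high-regularity bounds gives $E(u(t)) \rightarrow 0$ along $t \searrow -\infty$. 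Conservation of energy then pins $E(u_{0}) = 0$ exactly at minimal mass, and the sharp Gagliardo--Nirenberg inequality identifies $u_{0}$ as a rescaled translate of $Q$; the resulting soliton evolution $e^{it}Q$ manifestly violates $(\ref{4.3})$, which is the contradiction. In other words, the spreading enemy is excluded not by showing it scatters (it cannot be done at this mass) but by showing it must \emph{be} the soliton, which does not spread. Your time-change reduction and your identification of the enemy scenario are sound, but without the regularity/energy-pinning mechanism the argument does not close.
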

\noindent \emph{Proof:} Suppose that this were not true, that is, there exists a constant $C_{0} < \infty$ and a sequence $T_{n} \rightarrow \sup(I)$ such that for all $n \in \mathbb{Z}_{\geq 0}$,
\begin{equation}\label{4.3}
\frac{1}{\sup_{t \in [0, T_{n}]} \lambda(t)} \int_{0}^{T_{n}} \lambda(t)^{3} dt \leq C_{0}.
\end{equation}
This would correspond to the rapid cascade scenario in \cite{dodson2015global}, \cite{dodson2016global}, \cite{fan20182}. In those papers $N(t)$ was used instead of $\lambda(t)$. As in those papers, $\lambda(t)$ can be chosen to be continuous, so for each $T_{n}$ choose $t_{n} \in [0, T_{n}]$ such that
\begin{equation}\label{4.4}
\lambda(t_{n}) = \sup_{t \in [0, T_{n}]} \lambda(t).
\end{equation}

Since $I$ is the maximal interval of existence of $u$,
\begin{equation}\label{4.5}
\lim_{n \rightarrow \infty} \| u \|_{L_{t,x}^{6}([0, T_{n}] \times \mathbb{R})} = \infty.
\end{equation}
%Suppose without loss of generality that after passing to a further subsequence,
%\begin{equation}\label{4.6}
%\lim_{n \rightarrow \infty} \| u \|_{L_{t,x}^{6}([0, t_{n}] \times \mathbb{R})} = \infty.
%\end{equation}
By the almost periodicity property of $u$ and $(\ref{2.8})$, there exist $x(t_{n})$, $\xi(t_{n})$, and $\gamma(t_{n})$ such that if
\begin{equation}\label{4.7}
e^{i \gamma(t_{n})} \lambda(t_{n})^{1/2} e^{ix \xi(t_{n})} e^{i \gamma(t_{n})} u(t_{n}, \lambda(t_{n}) x + x(t_{n})) = v_{n}(x),
\end{equation}
then $v_{n}$ converges to some $u_{0}$ in $L^{2}(\mathbb{R})$, and $u_{0}$ is the initial data for a solution $u$ to $(\ref{1.1})$ that blows up in both time directions, $\lambda(t) \leq 1$ for all $t \leq 0$, and
\begin{equation}\label{4.8}
\int_{-\infty}^{0} \lambda(t)^{3} dt \leq C_{0}.
\end{equation}
Following the proof of Theorem $5.1$ in \cite{dodson2016global},
\begin{equation}\label{4.9}
\| u \|_{L_{t}^{\infty} \dot{H}^{s}((-\infty, 0] \times \mathbb{R})} \lesssim_{s} C_{0}^{s},
\end{equation}
for any $0 \leq s < 5$. Combining $(\ref{4.9})$ with $(\ref{4.8})$ and $|\lambda'(t)| \lesssim \lambda(t)^{3}$ implies
\begin{equation}\label{4.10}
\lim_{t \searrow -\infty} \lambda(t) = 0.
\end{equation}
Also, since
\begin{equation}\label{4.11}
|\xi'(t)| \lesssim \lambda(t)^{3},
\end{equation}
Equation $(\ref{4.8})$ implies that $\xi(t)$ converges to some $\xi_{-} \in \mathbb{R}$ as $t \searrow -\infty$. Make a Galilean transformation so that $\xi_{-} = 0$. Then, by interpolation, $(\ref{4.9})$ and $(\ref{4.10})$ imply
\begin{equation}\label{4.12}
\lim_{t \searrow -\infty} E(u(t)) = 0.
\end{equation}

Therefore, by conservation of energy, and convergence in $L^{2}$ of $(\ref{4.7})$,
\begin{equation}\label{4.13}
E(u_{0}) = 0, \qquad \text{and} \qquad \| u_{0} \|_{L^{2}} = \| Q \|_{L^{2}}.
\end{equation}
Therefore, by the Gagliardo-Nirenberg theorem,
\begin{equation}\label{4.14}
u_{0} = \lambda^{1/2} Q(\lambda(x - x_{0})), \qquad 0 < \lambda < \infty, \qquad x_{0} \in \mathbb{R},
\end{equation}
and $Q$ is the solution to the elliptic partial differential equation
\begin{equation}\label{4.15}
Q_{xx} + |Q|^{4} Q = Q.
\end{equation}
However, assuming without loss of generality that $x_{0} = 0$ and $\lambda = 1$, the solution to $(\ref{1.1})$ is given by
\begin{equation}\label{4.16}
u(t,x) = e^{it} Q(x), \qquad t \in \mathbb{R}.
\end{equation}
However, such a solution definitely does not satisfy $(\ref{4.3})$, which gives a contradiction. $\Box$\medskip

Therefore, consider the case when
\begin{equation}\label{4.17}
\lim_{n \rightarrow \infty} \frac{1}{\sup_{t \in [0, T_{n}]} \lambda(t)} \int_{0}^{T_{n}} \lambda(t)^{3} dt = \infty.
\end{equation}
Passing to a subsequence, suppose
\begin{equation}\label{4.18}
\frac{1}{\sup_{t \in [0, T_{n}]} \lambda(t)} \int_{0}^{T_{n}} \lambda(t)^{3} dt = 2^{2n}.
\end{equation}

Then as in \cite{dodson2015global}, replace $M(t)$ in the previous section with,
\begin{equation}\label{4.19}
M(t) = \int \int |Iu(t, y)|^{2} Im[\bar{Iu} Iu_{x}] \psi(\tilde{\lambda}(t) (x - y)) dx dy,
\end{equation}
where $\tilde{\lambda}(t)$ is given by the smoothing algorithm from \cite{dodson2015global}. Then
\begin{equation}\label{4.20}
\aligned
\frac{d}{dt} M(t) = -2 \tilde{\lambda}(t) \int \int Im[\bar{Iu} Iu_{y}] Im[\bar{Iu} Iu_{x}] \phi(\tilde{\lambda}(t) (x - y)) dx dy \\
+ \frac{1}{2} \tilde{\lambda}(t)^{3} \int \int |Iu(t,y)|^{2} |Iu(t,y)|^{2} \phi''(\tilde{\lambda}(t) (x - y)) dx dy \\
+ 2 \tilde{\lambda}(t) \int \int |Iu(t,y)|^{2} |Iu_{x}(t,x)|^{2}  \phi(\tilde{\lambda}(t) (x - y)) dx dy \\
- \frac{2}{3} \tilde{\lambda}(t) \int \int |Iu(t,y)|^{2} |Iu(t,x)|^{6} \phi(\tilde{\lambda}(t) (x - y)) dx dy + \mathcal E \\
+ \dot{\tilde{\lambda}}(t) \int \int |Iu(t, y)|^{2} Im[\bar{Iu} Iu_{x}] \phi(\tilde{\lambda}(t) (x - y)) (x - y) dx dy,
\endaligned
\end{equation}
where $I = P_{\leq 2^{2n} \cdot \sup_{t \in [0, T]} \lambda(t)}$.

Equation $(\ref{3.6})$ implies
\begin{equation}\label{4.21}
\sup_{t \in [0, T_{n}]} |M(t)| \lesssim R o(2^{2n}) \cdot \sup_{t \in [0, T]} \lambda(t).
\end{equation}
Next, since the smoothing algorithm guarantees that $\tilde{\lambda}(t) \leq \lambda(t)$, following $(\ref{3.8})$,
\begin{equation}\label{4.22}
\aligned
\int_{0}^{T_{n}} \frac{1}{2} \tilde{\lambda}(t)^{3} \int \int |u(t,y)|^{2} |u(t,y)|^{2} \phi''(\tilde{\lambda}(t) (x - y)) dx dy dt \\ \lesssim \frac{1}{R^{2}} \| u \|_{L^{2}}^{4} \cdot \int_{0}^{T_{n}} \tilde{\lambda}(t) \lambda(t)^{2} dt \lesssim R o(2^{2n}) \cdot \sup_{t \in [0, T]} \lambda(t).
\endaligned
\end{equation}
Since $\tilde{\lambda}(t) \leq \lambda(t)$, following the analysis in $(\ref{3.9})$--$(\ref{3.25})$,
\begin{equation}\label{4.27}
\aligned
\int_{0}^{T_{n}} \frac{1}{2} \tilde{\lambda}(t)^{3} \int \int |u(t,y)|^{2} |u(t,y)|^{2} \phi''(\tilde{\lambda}(t) (x - y)) dx dy dt \\
2 \int_{0}^{T_{n}} \tilde{\lambda}(t) \int \int Im[I\bar{u} Iu_{y}] Im[\bar{Iu} Iu_{x}] \phi(\tilde{\lambda}(t) (x - y)) dx dy dt \\
+ 2 \int_{0}^{T_{n}} \tilde{\lambda}(t) \int \int |Iu(t,y)|^{2} |Iu_{x}(t,x)|^{2}  \phi(\tilde{\lambda}(t) (x - y)) dx dy dt \\
- \frac{2}{3} \int_{0}^{T_{n}} \tilde{\lambda}(t) \int \int |u(t,y)|^{2} |u(t,x)|^{6} \phi(\tilde{\lambda}(t) (x - y)) dx dy dt
\endaligned
\end{equation}

\begin{equation}\label{4.28}
\aligned
= 4 \int_{0}^{T} \frac{\tilde{\lambda}(t) \lambda(t)^{2}}{R} \int (\int \chi^{2}(\frac{y - s}{R}) |v_{s,t}(t, y)|^{2} dy) \chi^{2}(\frac{x_{\ast} - s}{R}) E(\chi^{2}(\frac{x - x(t)}{R}) v_{s,t}(t,x)) ds dt \\ + R o(2^{2n}) \cdot \sup_{t \in [0, T]} \lambda(t) + O(\eta^{4} \| u \|_{L_{t}^{\infty} L_{x}^{2}}^{2} \int_{0}^{T_{n}} \tilde{\lambda}(t) \| u(t) \|_{L^{6}}^{6} dt) + O(\frac{C(\eta)}{R} \| u \|_{L_{t}^{\infty} L_{x}^{2}}^{2} \int_{0}^{T_{n}} \tilde{\lambda}(t) \| u(t) \|_{L_{x}^{6}}^{6} dt).
\endaligned
\end{equation}

\noindent \textbf{Remark:} The term $v_{s, t}$ is an abbreviation for
\begin{equation}\label{4.29}
v_{s, t} = \frac{e^{i x \xi(s)}}{\lambda(t)^{1/2}} Iu(t, \frac{x}{\lambda(t)}),
\end{equation}
where $\xi(s) \in \mathbb{R}$ is chosen such that
\begin{equation}\label{4.30}
\int \chi^{2}(\frac{\tilde{\lambda}(t) (x - s)}{R \lambda(t)}) Im[\bar{v}_{s, t} \partial_{x}(v_{s,t})] dx = 0.
\end{equation}

The error estimates can be handled in a manner similar to the previous section, see \cite{dodson2015global}. Therefore, it only remains to consider the contribution of the term in $(\ref{4.20})$ with $\tilde{\lambda}(t)$. By direct computation,
\begin{equation}\label{4.31}
\aligned
\dot{\tilde{\lambda}}(t) \int \int |Iu(t, y)|^{2} Im[\bar{Iu} Iu_{x}] \phi(\tilde{\lambda}(t) (x - y)) (x - y) dx dy  \\
= \frac{\dot{\tilde{\lambda}}(t)}{R \tilde{\lambda}(t)} \int (\int \chi^{2}(\frac{\tilde{\lambda}(t) y - s}{R}) |Iu(t,y)|^{2} dy)(\int \chi^{2}(\frac{\tilde{\lambda}(t) x - s}{R}) Im[\bar{Iu} Iu_{x}] (x \tilde{\lambda}(t) - s) dx) ds \\
- \frac{\dot{\tilde{\lambda}}(t)}{R \tilde{\lambda}(t)} \int (\int \chi^{2}(\frac{\tilde{\lambda}(t) y - s}{R}) (y \tilde{\lambda}(t) - s) |Iu(t,y)|^{2} dy)(\int \chi^{2}(\frac{\tilde{\lambda}(t) x - s}{R}) Im[\bar{Iu} Iu_{x}] dx) ds.
\endaligned
\end{equation}
Now rescale,
\begin{equation}\label{4.32}
\aligned
= \frac{\dot{\tilde{\lambda}}(t)}{R \tilde{\lambda}(t)} \lambda(t) \int (\int \chi^{2}(\frac{\tilde{\lambda}(t) y - \lambda(t) s}{R \lambda(t)}) |\frac{1}{\lambda(t)^{1/2}} Iu(t,\frac{y}{\lambda(t)})|^{2} dy) \\
\times (\int \chi^{2}(\frac{\tilde{\lambda}(t) x - \lambda(t) s}{R \lambda(t)}) Im[\frac{1}{\lambda(t)^{1/2}} \bar{Iu}(t, \frac{x}{\lambda(t)}) \partial_{x}(\frac{1}{\lambda(t)^{1/2}} Iu(t, \frac{x}{\lambda(t)})] (\frac{x \tilde{\lambda}(t) - s \lambda(t)}{\lambda(t)}) dx) ds \\
- \frac{\dot{\tilde{\lambda}}(t)}{R \tilde{\lambda}(t)} \lambda(t) \int (\int \chi^{2}(\frac{\tilde{\lambda}(t) y - \lambda(t) s}{R \lambda(t)}) (\frac{y \tilde{\lambda}(t) - s \lambda(t)}{\lambda(t)}) |\frac{1}{\lambda(t)^{1/2}} Iu(t,\frac{y}{\lambda(t)})|^{2} dy) \\ 
\times (\int \chi^{2}(\frac{\tilde{\lambda}(t) x - s \lambda(t)}{R \lambda(t)}) Im[\frac{1}{\lambda(t)^{1/2}} \bar{Iu}(t, \frac{x}{\lambda(t)}) \partial_{x}(\frac{1}{\lambda(t)^{1/2}} Iu(t, \frac{x}{\lambda(t)}))] dx) ds.
\endaligned
\end{equation}
\begin{remark}
Throughout these calculations, we understand that $\lambda^{1/2} Iu(\frac{x}{\lambda})$ refers to the rescaling of the function $Iu(x)$, not the $I$-operator acting on a rescaling of $u$.
\end{remark}

For any $\xi \in \mathbb{R}$,
\begin{equation}\label{4.33}
\aligned
= \frac{\dot{\tilde{\lambda}}(t)}{R \tilde{\lambda}(t)} \lambda(t) \int (\int \chi^{2}(\frac{\tilde{\lambda}(t) y - \lambda(t) s}{R \lambda(t)}) |\frac{e^{ix \xi}}{\lambda(t)^{1/2}} Iu(t,\frac{y}{\lambda(t)})|^{2} dy) \\
\times (\int \chi^{2}(\frac{\tilde{\lambda}(t) x - \lambda(t) s}{R \lambda(t)}) Im[\frac{e^{-ix \xi}}{\lambda(t)^{1/2}} \bar{Iu}(t, \frac{x}{\lambda(t)}) \partial_{x}(\frac{e^{ix \xi}}{\lambda(t)^{1/2}} Iu(t, \frac{x}{\lambda(t)})] (\frac{x \tilde{\lambda}(t) - s \lambda(t)}{\lambda(t)}) dx) ds \\
- \frac{\dot{\tilde{\lambda}}(t)}{R \tilde{\lambda}(t)} \lambda(t) \int (\int \chi^{2}(\frac{\tilde{\lambda}(t) y - \lambda(t) s}{R \lambda(t)}) (\frac{y \tilde{\lambda}(t) - s \lambda(t)}{\lambda(t)}) |\frac{e^{ix \xi}}{\lambda(t)^{1/2}} Iu(t,\frac{y}{\lambda(t)})|^{2} dy) \\ 
\times (\int \chi^{2}(\frac{\tilde{\lambda}(t) x - s \lambda(t)}{R \lambda(t)}) Im[\frac{e^{-ix \xi}}{\lambda(t)^{1/2}} \bar{Iu}(t, \frac{x}{\lambda(t)}) \partial_{x}(\frac{e^{ix \xi}}{\lambda(t)^{1/2}} Iu(t, \frac{x}{\lambda(t)}))] dx) ds.
\endaligned
\end{equation}
In particular, if we choose $\xi = \xi(s)$,
\begin{equation}\label{4.34}
\aligned
= \frac{\dot{\tilde{\lambda}}(t)}{R \tilde{\lambda}(t)} \lambda(t) \int (\int \chi^{2}(\frac{\tilde{\lambda}(t) y - \lambda(t) s}{R \lambda(t)}) |v_{s,t}|^{2} dy) \\ \times (\int \chi^{2}(\frac{\tilde{\lambda}(t) x - \lambda(t) s}{R \lambda(t)}) Im[\bar{v}_{s,t}(t, \frac{x}{\lambda(t)}) \partial_{x}(v_{s,t})] (\frac{x \tilde{\lambda}(t) - s \lambda(t)}{\lambda(t)}) dx) ds \\
= \frac{\dot{\tilde{\lambda}}(t)}{R} \int (\int \chi^{2}(\frac{\tilde{\lambda}(t) (y - s)}{R \lambda(t)}) |v_{s,t}|^{2} dy)  (\int \chi^{2}(\frac{\tilde{\lambda}(t) (x - s)}{R \lambda(t)}) Im[\bar{v}_{s,t}(t, \frac{x}{\lambda(t)}) \partial_{x}(v_{s,t})] (\frac{\tilde{\lambda}(t)(x - s)}{\lambda(t)}) dx) ds.
\endaligned
\end{equation}
Then by the Cauchy-Schwarz inequality,
\begin{equation}\label{4.35}
\aligned
\lesssim  \frac{\eta^{4}}{R} \lambda(t) \tilde{\lambda}(t)^{2} \int  (\int \chi^{2}(\frac{\tilde{\lambda}(t) (y - s)}{R \lambda(t)}) |v_{s,t}|^{2} dy) (\int \chi^{2}(\frac{\tilde{\lambda}(t) (x - s)}{R \lambda(t)}) |\partial_{x}(v_{s,t})|^{2} dx) ds \\
+ \frac{1}{\eta^{4}} \frac{|\dot{\tilde{\lambda}}(t)|^{2}}{\lambda(t) \tilde{\lambda}(t)^{2}} \frac{\tilde{\lambda}(t)}{R \lambda(t)} (\int \chi^{2}(\frac{\tilde{\lambda}(t) (y - s)}{R \lambda(t)}) |v_{s,t}|^{2} dy) \int \chi^{2}(\frac{\tilde{\lambda}(t) (x - s)}{R \lambda(t)}) |v_{s,t}|^{2} (\frac{\tilde{\lambda}(t)(x - s)}{\lambda(t)})^{2} dx)
\endaligned
\end{equation}

The first term in $(\ref{4.35})$ can be absorbed into $(\ref{4.28})$. The second term in $(\ref{4.35})$ is bounded by
\begin{equation}\label{4.36}
\frac{1}{\eta^{4}} \frac{|\dot{\tilde{\lambda}}(t)|^{2}}{\lambda(t) \tilde{\lambda}(t)^{2}} R^{2} \| u \|_{L_{t}^{\infty} L_{x}^{2}}^{4}.
\end{equation}
The smoothing algorithm from \cite{dodson2015global} is used to control this term. Recall that after $n$ iterations of the smoothing algorithm on an interval $[0, T]$, $\tilde{\lambda}(t)$ has the following properties:

\begin{enumerate}
\item $\tilde{\lambda}(t) \leq \lambda(t)$,

\item If $\dot{\tilde{\lambda}}(t) \neq 0$, then $\lambda(t) = \tilde{\lambda}(t)$,

\item $\tilde{\lambda}(t) \geq 2^{-n} \lambda(t)$,
 
\item $\int_{0}^{T} |\dot{\tilde{\lambda}}(t)| dt \leq \frac{1}{n} \int_{0}^{T} |\dot{\lambda}(t)| \frac{\tilde{\lambda}(t)}{\lambda(t)} dt$, with implicit constant independent of $n$ and $T$.
\end{enumerate}

Therefore,
\begin{equation}\label{4.37}
\int_{0}^{T_{n}} \frac{1}{\eta^{4}} \frac{|\dot{\tilde{\lambda}}(t)|^{2}}{\lambda(t) \tilde{\lambda}(t)^{2}} R^{2} \| u \|_{L_{t}^{\infty} L_{x}^{2}}^{4} dt \leq \frac{1}{\eta^{4}} \| u \|_{L_{t}^{\infty} L_{x}^{2}}^{4} \int_{0}^{T_{n}} \frac{|\dot{\lambda}(t)|}{\lambda(t)^{3}} R^{2} |\dot{\tilde{\lambda}}(t)| dt \lesssim \frac{1}{n} \frac{R^{2}}{\eta^{4}} \| u \|_{L_{t}^{\infty} L_{x}^{2}}^{4} \int_{0}^{T_{n}} \tilde{\lambda}(t)  \lambda(t)^{2} dt.
\end{equation}

Since $\sup_{t \in [0, T_{n}]} \lambda(t) \leq 2^{-2n} \int_{0}^{T_{n}} \lambda(t)^{3} dt$,
\begin{equation}\label{4.39}
R_{n} \sup_{t \in [0, T_{n}]} |M(t)| \lesssim R_{n} o(2^{2n}) \cdot \sup_{t \in [0, T]} \lambda(t).
\end{equation}
Therefore, it is possible to take a sequence $\eta_{n} \searrow 0$, $R_{n} \nearrow \infty$, probably very slowly, such that
\begin{equation}\label{4.38}
\frac{1}{n} \frac{R^{2}}{\eta^{4}} \| u \|_{L_{t}^{\infty} L_{x}^{2}}^{4} \int_{0}^{T_{n}} |\tilde{\lambda}(t)| \lambda(t)^{2} dt = o_{n}(1) \int_{0}^{T_{n}} \tilde{\lambda}(t) \lambda(t)^{2} dt,
\end{equation}
\begin{equation}\label{4.40}
R_{n} \sup_{t \in [0, T_{n}]} |M(t)| \lesssim o(2^{2n}) \cdot \sup_{t \in [0, T]} \lambda(t),
\end{equation}
\begin{equation}\label{4.40.1}
O(\eta_{n}^{4} \| u \|_{L_{t}^{\infty} L_{x}^{2}}^{2} \int_{0}^{T} \tilde{\lambda}(t) \| u(t) \|_{L^{6}}^{6} dt) \lesssim o_{n}(1) \int_{0}^{T_{n}} \tilde{\lambda}(t) \lambda(t)^{2} dt,
\end{equation}
and
\begin{equation}\label{4.40.2}
O(\frac{C(\eta_{n})}{R_{n}} \| u \|_{L_{t}^{\infty} L_{x}^{2}}^{2} \int_{0}^{T_{n}} \tilde{\lambda}(t) \| u(t) \|_{L_{x}^{6}}^{6} dt) \lesssim o_{n}(1) \int_{0}^{T_{n}} \tilde{\lambda}(t) \lambda(t)^{2} dt.
\end{equation}

Therefore, these terms may be safely treated as error terms, and repeating the analysis in $(\ref{3.17.1})$--$(\ref{3.36})$ for $(\ref{4.28})$, there exists a sequence of times $t_{n} \nearrow \sup(I)$ such that

\begin{equation}
E(\chi(\frac{(x - x(t_{n})) \tilde{\lambda}(t_{n})}{R_{n} \lambda(t_{n})}) v_{s_{n}, t_{n}}) \rightarrow 0,
\end{equation}
\begin{equation}
\| (1 - \chi(\frac{(x - x(t_{n})) \tilde{\lambda}(t_{n})}{R_{n} \lambda(t_{n})})) v_{s_{n}, t_{n}} \|_{L^{2}} \rightarrow 0,
\end{equation}
\begin{equation}
\| v_{s_{n}, t_{n}} \|_{L^{2}} \nearrow \| Q \|_{L^{2}},
\end{equation}
and
\begin{equation}\label{3.32}
\| \chi(\frac{(x - x(t_{n})) \tilde{\lambda}(t_{n})}{R_{n} \lambda(t_{n})}) Iv_{s_{n}, t_{n}} \|_{L^{6}} \sim 1.
\end{equation}
%\begin{remark}
%We suppress the dependence of $I$ on $n$ in the notation.
%\end{remark}

In this case as well, we can show that this sequence converges in $H^{1}$ to
\begin{equation}\label{3.36}
u_{0} = \lambda^{1/2} Q(\lambda(x - x_{0})).
\end{equation}
This proves Theorem $\ref{t2.2}$ for a general $\lambda(t)$. $\Box$

\section{Proof of Theorem $\ref{t1.3}$:} 
The proof of Theorem $\ref{t1.3}$ uses the argument used in the proof of Theorem $\ref{t1.2}$, combined with some reductions from \cite{fan20182}. First recall Lemma $4.2$ from \cite{fan20182}.
\begin{lemma}\label{l5.1}
Let $u$ be a solution to $(\ref{1.1})$ that satisfies the assumptions of Theorem $\ref{t1.3}$. Then there exists a sequence $t_{n} \nearrow T^{+}(u)$ such that $u(t_{n})$ admits a profile decomposition with profiles $\{ \phi_{j}, \{ x_{j,n}, \lambda_{j,n}, \xi_{j,n}, t_{j,n}, \gamma_{j,n} \} \}$, and there is a unique profile, call it $\phi_{1}$, such that
\begin{enumerate}
\item $\| \phi_{1} \|_{L^{2}} \geq \| Q \|_{L^{2}}$,

\item The nonlinear profile $\Phi_{1}$ associated to $\phi_{1}$ is an almost periodic solution in the sense of $(\ref{2.8})$ that does not scatter forward or backward in time.
\end{enumerate}
\end{lemma}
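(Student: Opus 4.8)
The plan is to run the concentration--compactness (Palais--Smale) reduction, using the scattering theorem of \cite{dodson2015global} below the ground-state mass in place of a small-data theory. Since $u$ does not scatter forward, $\| u \|_{L_{t,x}^{6}((0,T^{+}(u)) \times \mathbb{R})} = \infty$, and for any $t_{n} \nearrow T^{+}(u)$ the sequence $\{ u(t_{n}) \}$ is bounded in $L^{2}$ with $\| u(t_{n}) \|_{L^{2}} = \| u_{0} \|_{L^{2}} \le \| Q \|_{L^{2}} + \alpha$. First I would apply the $L^{2}$ linear profile decomposition (as in \cite{tao2008minimal}, \cite{dodson2019defocusing}) to extract, after passing to a subsequence, profiles $\phi_{j}$ with parameters $\{ x_{j,n}, \lambda_{j,n}, \xi_{j,n}, t_{j,n}, \gamma_{j,n} \}$, asymptotic orthogonality of these parameters, and the mass decoupling $\| u_{0} \|_{L^{2}}^{2} = \sum_{j=1}^{J} \| \phi_{j} \|_{L^{2}}^{2} + \lim_{n \to \infty} \| w_{n}^{J} \|_{L^{2}}^{2} + o_{J}(1)$, where $w_{n}^{J}$ is the usual remainder with Strichartz norm vanishing as $J, n \to \infty$.

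Next I would establish existence and uniqueness of the large profile. For existence, argue by contradiction: if $\| \phi_{j} \|_{L^{2}} < \| Q \|_{L^{2}}$ for every $j$, then each nonlinear profile scatters by \cite{dodson2015global}, and summing these through the nonlinear profile decomposition together with the long-time perturbation (stability) lemma would force $u$ itself to scatter forward, contradicting the hypothesis; hence some $\| \phi_{j} \|_{L^{2}} \ge \| Q \|_{L^{2}}$. Uniqueness follows from the mass decoupling and the smallness of $\alpha$: since $(\| Q \|_{L^{2}} + \alpha)^{2} < 2 \| Q \|_{L^{2}}^{2}$, two orthogonal profiles of mass $\ge \| Q \|_{L^{2}}$ cannot coexist. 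Relabel the unique large profile as $\phi_{1}$, so every other profile has mass $< \| Q \|_{L^{2}}$ and therefore scatters. Feeding this back into the perturbation argument shows the forward non-scattering of $u$ cannot come from the small profiles or the remainder, so it is carried by $\Phi_{1}$; in particular $\Phi_{1}$ does not scatter forward, which is half of conclusion (2).

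The main obstacle is upgrading $\Phi_{1}$ to an almost periodic solution that fails to scatter in both time directions. I would obtain this by choosing $\phi_{1}$ to realize the least mass attainable by a large profile, i.e. by first proving a Palais--Smale condition at the critical mass: any sequence of forward-non-scattering solutions whose masses decrease to the threshold has a subsequence converging in $L^{2}$, modulo the symmetry group $(\ref{2.1.1})$, to a forward-non-scattering solution. Granting this, minimality of $\| \phi_{1} \|_{L^{2}}$ forbids any genuine mass splitting in a profile decomposition of $\Phi_{1}(\tau_{m})$ along times $\tau_{m}$ in its lifespan---otherwise the resulting large sub-profile would be a forward-non-scattering solution of strictly smaller mass---so the orbit $\{ \Phi_{1}(\tau) \}$ is precompact modulo symmetries, which is precisely almost periodicity in the sense of $(\ref{2.8})$. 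Running the identical minimal-mass extraction in backward time then rules out backward scattering as well, completing (2). The two places I expect to absorb most of the work are, first, the bookkeeping of the Galilean parameters $\xi_{j,n}$ and the spatial translations $x_{j,n}$ inside the perturbation and orthogonality estimates (these symmetries are exactly what is new relative to the radial setting of \cite{fan20182}), and second, verifying the Palais--Smale compactness uniformly for masses in the narrow window $[\| Q \|_{L^{2}}, \| Q \|_{L^{2}} + \alpha]$, where one must handle the profiles with $t_{j,n} \to \pm\infty$ by replacing $\phi_{j}$ with the data of the corresponding scattering solution.
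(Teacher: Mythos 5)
The paper itself does not prove this lemma: it is imported verbatim as Lemma $4.2$ of \cite{fan20182}, and your concentration--compactness skeleton is faithful to that source's argument --- linear profile decomposition with the full symmetry group $(\ref{2.1.1})$, existence of a large profile by contradiction via the scattering theorem of \cite{dodson2015global} plus long-time stability, uniqueness from mass decoupling since $(\| Q \|_{L^{2}} + \alpha)^{2} < 2 \| Q \|_{L^{2}}^{2}$ for $\alpha$ small, and a minimal-mass Palais--Smale extraction to upgrade the large nonlinear profile to an almost periodic solution. You also correctly flag the Galilean and translation bookkeeping as the new ingredient relative to the symmetric setting of \cite{fan20182}. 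Two points you leave implicit should be made explicit: the class over which you minimize must be closed under re-extraction (a large profile of a decomposition of $\Phi_{1}$ must itself be realized as a profile of $u(t_{n})$ along a diagonal sequence of times, so that minimality applies to it), and the diagonal times must still approach $T^{+}(u)$, which is precisely why the paper insists on $s_{m} \geq 0$ in the proof of Theorem \ref{t2.1}.

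The genuine gap is your treatment of backward non-scattering. Running ``the identical minimal-mass extraction in backward time'' produces \emph{some} backward-non-scattering solution, but nothing forces it to coincide with your forward minimizer $\Phi_{1}$: minimality of mass among forward-non-scattering profiles says nothing about the backward behavior of the minimizer itself, so as written conclusion (2) is not established for a single solution. The standard repair is the mechanism the paper already uses at $(\ref{2.1.3})$--$(\ref{2.2})$: for any $t_{n} \nearrow T^{+}(u)$ one has $\| u \|_{L_{t,x}^{6}((t_{n}, T^{+}(u)) \times \mathbb{R})} = \infty$ for every $n$, and, because the forward norm already diverges, also $\lim_{n \rightarrow \infty} \| u \|_{L_{t,x}^{6}((T^{-}(u), t_{n}) \times \mathbb{R})} = \infty$. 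Since every other profile has mass below $\| Q \|_{L^{2}}$ and hence scatters in both time directions by \cite{dodson2015global}, with scattering norms summable by mass decoupling, the stability lemma applied backward from time $t_{n}$ would yield a bound on $\| u \|_{L_{t,x}^{6}((T^{-}(u), t_{n}) \times \mathbb{R})}$ uniform in $n$ if $\Phi_{1}$ scattered backward --- a contradiction; so the same $\Phi_{1}$ fails to scatter in both directions. The same two-sided divergence is what excludes $t_{1,n} \rightarrow \pm \infty$ for the \emph{large} profile (otherwise its nonlinear profile scatters in one time direction, giving the identical contradiction); you handle the $t_{j,n} \rightarrow \pm \infty$ cases only for the small profiles inside the Palais--Smale step, but this exclusion for $\phi_{1}$ is needed before conclusion (2) can even be stated with $\Phi_{1}(0) = \phi_{1}$.
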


Now consider the nonlinear profile $\Phi_{1}$. To simplify notation relabel $\Phi_{1} = u$, and let $v_{s, t}$ be as in $(\ref{4.29})$. Using the same arguments as in the proof of Theorem $\ref{t1.2}$, there exists a sequence $t_{n} \nearrow T^{+}(u)$, $R_{n} \nearrow \infty$, $s_{n} \in \mathbb{R}$, $\tilde{\lambda}(t) \leq \lambda(t)$, such that
\begin{equation}\label{5.1}
E(\chi(\frac{(x - x(t_{n})) \tilde{\lambda}(t_{n})}{R_{n} \lambda(t_{n})}) v_{s_{n}, t_{n}}) \rightarrow 0,
\end{equation}
\begin{equation}\label{5.2}
\| (1 - \chi(\frac{(x - x(t_{n})) \tilde{\lambda}(t_{n})}{R_{n} \lambda(t_{n})})) v_{s_{n}, t_{n}} \|_{L^{2}} \rightarrow 0,
\end{equation}
\begin{equation}
\| v_{s_{n}, t_{n}} \|_{L^{2}} \nearrow \| u \|_{L^{2}},
\end{equation}
and
\begin{equation}\label{5.3}
\| \chi(\frac{(x - x(t_{n})) \tilde{\lambda}(t_{n})}{R_{n} \lambda(t_{n})}) I v_{s_{n}, t_{n}} \|_{L^{6}} \sim 1.
\end{equation}
Therefore, by the almost periodicity of $v$, there exists a sequence $g(t_{n})$ given by $(\ref{2.1.1})$ such that
\begin{equation}\label{5.4}
g(t_{n}) v(t_{n}) \rightarrow u_{0}, \qquad \text{in} \qquad L^{2},
\end{equation}
where $E(u_{0}) = 0$ and $\| u_{0} \|_{L^{2}} \geq \| Q \|_{L^{2}}$.\medskip

Next, utilize a blowup result of \cite{merle2003sharp}, \cite{merle2004universality}, \cite{merle2005blow} \cite{merle2006sharp}. We will state it here as it is stated in Theorem $3.1$ of \cite{fan20182}.
\begin{theorem}\label{t5.2}
Assume $u$ is a solution to $(\ref{1.1})$ with $\mu = -1$, and with initial data in $H^{1}$ that has non-positive energy and satisfies $(\ref{1.18})$. If $u$ is of zero energy, then $u$ blows up in finite time according to the log-log law,
\begin{equation}\label{5.5}
u(t,x) = \frac{1}{\lambda(t)^{1/2}} (Q + \epsilon)(\frac{x - x(t)}{\lambda(t)}) e^{i \gamma(t)}, \qquad x(t) \in \mathbb{R}, \qquad \gamma(t) \in \mathbb{R}, \lambda(t) > 0, \qquad \| \epsilon \|_{H^{1}} \leq \delta(\alpha),
\end{equation}
with the estimate
\begin{equation}\label{5.6}
\lambda(t) \sim \sqrt{\frac{T - t}{\ln|\ln(T - t)|}},
\end{equation}
and
\begin{equation}\label{5.7}
\lim_{t \rightarrow T} \int (|\nabla \epsilon(t,x)|^{2} + |\epsilon(t,x)|^{2} e^{-|x|}) dx = 0.
\end{equation}
\end{theorem}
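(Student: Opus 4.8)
This statement is the log-log blowup theorem of Merle and Rapha\"el, and the honest plan is to invoke \cite{merle2003sharp, merle2004universality, merle2005blow, merle2006sharp} rather than reprove it. If I were to reconstruct the argument, I would proceed in three stages.

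First, I would establish the geometric decomposition near the ground state. Since $(\ref{1.18})$ places $\|u_0\|_{L^2}$ within $\alpha$ of $\|Q\|_{L^2}$ and the energy vanishes, the sharp Gagliardo--Nirenberg inequality $(\ref{1.9})$ together with conservation of mass and energy constrains the solution to remain close, in $\dot H^1$ up to scaling, to the manifold of ground states $\{e^{i\gamma}\lambda^{1/2}Q(\lambda(\cdot-x))\}$. A variational/concentration-compactness argument then yields, for each $t$, parameters $\lambda(t), x(t), \gamma(t)$ for which $\lambda(t)^{1/2}e^{-i\gamma(t)}u(t,\lambda(t)\cdot+x(t)) = Q+\epsilon$ with $\|\epsilon\|_{H^1}\le\delta(\alpha)$, giving $(\ref{5.5})$.

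Second, I would run modulation theory. The plan is to fix the parameters uniquely by imposing orthogonality conditions on $\epsilon$ against $Q$ and the symmetry-generated and degenerate directions of the linearized operator, and to introduce an auxiliary parameter $b(t)\sim-\lambda\lambda_t$ controlling the almost self-similar regime, replacing $Q$ by refined profiles $Q_b$ that absorb the leading self-similar correction. Differentiating the orthogonality conditions produces a closed system of modulation ODEs for $(\lambda,x,\gamma,b)$.

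The technical core, and what I expect to be the main obstacle, is twofold. First I would need a spectral coercivity estimate: a suitable quadratic functional in $\epsilon$ controls $\|\epsilon\|_{H^1}^2$ modulo the orthogonality directions, which rests on the precise spectrum of the Hamiltonian linearized about $Q$. Then I would construct a monotone Lyapunov functional, of the schematic form $\lambda^{-2}(E(u)+\text{localized virial corrections})$, whose dissipation both forces finite-time blowup and, when matched against the modulation ODEs, pins down the rate $\lambda(t)\sim\sqrt{(T-t)/\ln|\ln(T-t)|}$ in $(\ref{5.6})$. The weighted convergence $(\ref{5.7})$ of $\epsilon$ would then follow from the monotonicity-supplied dissipation. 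Making the coercivity and the Lyapunov monotonicity rigorous is precisely the delicate part of the Merle--Rapha\"el program, which is the reason to cite it here rather than carry it out.
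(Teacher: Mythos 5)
Your proposal matches the paper exactly: the paper offers no proof of Theorem \ref{t5.2}, but imports it verbatim (via Theorem 3.1 of \cite{fan20182}) from the Merle--Rapha\"el log-log blowup papers \cite{merle2003sharp}, \cite{merle2004universality}, \cite{merle2005blow}, \cite{merle2006sharp}, which is precisely the citation you make. Your sketch of the underlying program --- geometric decomposition near $Q$ via the sharp Gagliardo--Nirenberg inequality, modulation with the $b$-parameter and $Q_b$ profiles, spectral coercivity, and Lyapunov monotonicity yielding $(\ref{5.6})$ and $(\ref{5.7})$ --- is an accurate summary of those cited works, and correctly identified as material to cite rather than reprove.
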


Let $u$ be the solution to $(\ref{1.1})$ with initial data $u_{0}$. If $\| u_{0} \|_{L^{2}} = \| Q \|_{L^{2}}$ then we are done, using the analysis in the previous section. If $\| u_{0} \|_{L^{2}} > \| Q \|_{L^{2}}$, then Theorem $\ref{t5.2}$ implies that $u$ must be of the form $(\ref{5.5})$. Furthermore, by perturbative arguments, for any fixed $t' \in [0, T)$, $(\ref{5.4})$ implies that there exists a sequence $g(t_{n}, t')$ such that
\begin{equation}\label{5.8}
g(t_{n}, t') v(t_{n} + \frac{t'}{\lambda(t_{n})^{2}}) \rightarrow u(t'), \qquad \text{in} \qquad L^{2}.
\end{equation}
In fact, perturbative arguments also imply that there exists a sequence $t_{n}' \nearrow \infty$, perhaps very slowly, such that
\begin{equation}\label{5.9}
\| g(t_{n}, t_{n}') v(t_{n} + \frac{t_{n}'}{\lambda(t_{n})^{2}}) - u(t_{n}') \|_{L^{2}} \rightarrow 0.
\end{equation}
To see why this is so, first observe that if $g(t_{n}) v(t_{n}) = u_{0}$, then the uniqueness of solutions to $(\ref{1.1})$ combined with $(\ref{1.2})$ and $(\ref{1.15})$ implies that there exists $g(t_{n}, t')$ such that
\begin{equation}\label{5.9.1}
g(t_{n}, t') v(t_{n} + \frac{t'}{\lambda(t_{n})^{2}}) = u(t').
\end{equation}
Suppose that
\begin{equation}\label{5.9.2}
g(t_{n}) v(t_{n}) = u_{0} + w_{0}, \qquad \| w_{0} \|_{L^{2}} \ll 1.
\end{equation}
For $\tilde{T} \in [0, T)$ fixed, $\| u \|_{L_{t,x}^{6}([0, \tilde{T}] \times \mathbb{R})} < \infty$. Therefore, for $\| w_{0} \|_{L^{2}}$ sufficiently small, if $t' \in [0, \tilde{T}]$,
\begin{equation}\label{5.9.3}
g(t_{n}, t') v(t_{n} + \frac{t'}{\lambda(t_{n})^{2}}) = u(t') + w(t'),
\end{equation}
where $u$ solves $(\ref{1.1})$ and $w$ solves
\begin{equation}\label{5.9.4}
i w_{t} + \Delta w = -|u + w|^{4}(u + w) + |u|^{4} u.
\end{equation}
Partitioning $[0, \tilde{T}]$ into finitely many pieces such that $\| u \|_{L_{t,x}^{6}(I_{j} \times \mathbb{R})} \leq \epsilon$ on each piece, and iterating perturbative arguments on each piece, (see Chapter $1.3$ of \cite{dodson2019defocusing}), for $\| w_{0} \|_{L^{2}}$ sufficiently small,
\begin{equation}\label{5.9.5}
\| w \|_{L_{t}^{\infty} L_{x}^{2}([0, \tilde{T}] \times \mathbb{R})} \lesssim \exp(\frac{\| u \|_{L_{t,x}^{6}([0, \tilde{T}] \times \mathbb{R})}^{6}}{\epsilon^{6}}) \| w_{0} \|_{L^{2}}.
\end{equation}
Both $(\ref{5.8})$ and $(\ref{5.9})$ clearly follow from $(\ref{5.9.5})$, since as $\| w_{0} \|_{L^{2}}$, it is possible to take $\tilde{T} \nearrow T$ sufficiently slowly such that the right hand side of $(\ref{5.9.5})$ goes to zero.

Furthermore, Theorem $\ref{t5.2}$ implies that there exists a sequence $g(t_{n}')$ such that
\begin{equation}\label{5.10}
g(t_{n}') u(t_{n}') \rightharpoonup Q, \qquad \text{weakly in} \qquad L^{2}.
\end{equation}
Combining $(\ref{5.9})$ and $(\ref{5.10})$,
\begin{equation}\label{5.11}
g(t_{n}') g(t_{n}, t_{n}') v(t_{n} + \frac{t_{n}'}{\lambda(t_{n})^{2}}) \rightharpoonup Q, \qquad \text{weakly in} \qquad L^{2}.
\end{equation}
This completes the proof of Theorem $\ref{t1.3}$.

\section*{acknowledgements}
The author was supported on NSF grant DMS-$1764358$. The author also gratefully acknowledges the helpful comments and changes recommended by the anonymous referees.

\bibliography{biblio}
\bibliographystyle{plain}

\end{document}